\documentclass[12pt, a4paper]{amsart}

\usepackage{fullpage}

\usepackage{color}
\definecolor{webcolor}{rgb}{0,0,1}
\definecolor{webbrown}{rgb}{.6,0,0}
\usepackage[
        colorlinks,
        linkcolor=webbrown, filecolor=webbrown,  citecolor=webbrown,
        pdfauthor={},
  pdftitle={},
]{hyperref}

\usepackage{amsmath}
\usepackage{amsthm}
\usepackage{amssymb}
\usepackage[capitalise]{cleveref}
\usepackage{booktabs}

\usepackage{tikz-cd}

\def\Z{\mathbf Z}
\def\F{\mathbf F}
\def\Q{\mathbf Q}

\def\Pfactor{\textsc{Factor}}
\def\Pdisclog{\textsc{DiscLog}}
\def\Pprimroots{\textsc{PrimitiveRoot}}
\DeclareMathOperator{\mat}{M}
\DeclareMathOperator{\rank}{rank}
\DeclareMathOperator{\GL}{GL}
\DeclareMathOperator{\SL}{SL}

\DeclareMathOperator{\jac}{J}
\DeclareMathOperator{\K1}{K_1}
\DeclareMathOperator{\Aut}{Aut}
\DeclareMathOperator{\image}{im}

\theoremstyle{definition}
\newtheorem{theorem}{Theorem}[section]
\newtheorem{lemma}[theorem]{Lemma}
\newtheorem{example}[theorem]{Example}
\newtheorem{remark}[theorem]{Remark}
\newtheorem{cor}[theorem]{Corollary}
\newtheorem{prop}[theorem]{Proposition}
\newtheorem*{problem*}{Problem}

\Crefname{cor}{Corollary}{Corollaries}

\newtheorem{introtheorem}{Theorem}[section]

\newcommand{\ab}[1]{#1^{\times \mathrm{ab}}}

\title{Determining unit groups and $\mathrm{K}_1$ of finite rings}

\author{Tommy Hofmann}
\address{
Naturwissenschaftlich-Technische Fakult\"at\\
Universit\"at Siegen\\
Walter-Flex-Straße 3\\
57068 Siegen\\
Germany}
\email{tommy.hofmann@uni-siegen.de}

\date{\today}

\subjclass{Primary 68W30, 19D50, 20C05, 19-08, 11Y16; Secondary 16Z05}

\keywords{finite rings, unit groups, first K-groups, finite presentations, algorithms.}

\begin{document}

\renewcommand{\theenumi}{(\roman{enumi})}%

\begin{abstract}
  We consider the computational problem of determining the unit group of a finite ring, by which we mean
  the computation of a finite presentation together with an algorithm to express units as words in the generators.
  We show that the problem is equivalent to the number theoretic problems of factoring integers and solving discrete logarithms in finite fields.
  A similar equivalence is shown for the problem of determining the abelianization of the unit group or the first $K$-group of finite rings.
\end{abstract}

\maketitle

\section{Introduction}

Throughout, all rings are assumed to be associative unitary rings.
We consider the computational problem of determining the structure of the unit group of a finite ring $R$.
Informally, by this we mean determining a finitely presented group $P = \langle X \, | \, T \rangle$ and an isomorphism
$\alpha \colon R^\times \to P$
such that both $\alpha$ and $\alpha^{-1}$ can be evaluated efficiently (see Section~\ref{sec:prelim} for precise definitions regarding the encoding of objects and complexity).

Limitations of such an undertaking are immediate once one considers the situation for commutative rings,
 for which computations with and in unit groups are a classical topic in algorithmic number theory and cryptography.
 For example, determining the order $\lvert (\Z/n\Z)^\times \rvert$ of the unit group of the finite ring $\Z/n\Z$ is equivalent to the problem of factoring $n$; see~\cite{MR480295,MR2488898}.
For a finite field $\F_q$, determining a generator of the unit group amounts to finding a primitive root (which reduces to factoring $q - 1$), while expressing units with respect to a generator is the discrete logarithm problem for finite fields.
Given these observations, it is reasonable to expect that determining unit groups of arbitrary finite rings is related to these ``number theoretic obstacles'', the problems of factoring integers (\Pfactor) and of computing discrete logarithms in finite fields (\Pdisclog).

Another class of finite commutative rings, where algorithms for computing unit groups have been investigated extensively, are quotient rings of the form $\mathcal{O}/I$, where $I$ is a nonzero ideal of a $\Z$-order $\mathcal{O}$ in a number field; see~\cite{MR1972751, MR1443117, MR1446497}.
These have been generalized in~\cite{MR2166795} to quotients of $\F_q[X]$-orders in global function fields and in~\cite{MR2123128} to $\Z$-orders in étale $\Q$-algebras.
Although not explicitly stated, it is straightforward to show that these algorithms reduce the computation of and with these unit groups to \Pfactor{} and \Pdisclog.
In~\cite{MR4782659}, the authors consider the computation of unit groups of finite commutative $\Z$-algebras, which include finite commutative rings as a special case.

Similar to the commutative case, unit groups of finite noncommutative rings play an important role when investigating $\Z$-orders in noncommutative (semisimple) $\Q$-algebras. %
For such an order $\Lambda$, Bley--Boltje~\cite{MR2282916} describe an algorithm to determine generators for certain quotients $(\Lambda/I)^\times$ (later generalized in~\cite{MR4493243} and shown to reduce to \Pfactor), a subproblem when determining the locally free class group of $\Lambda$.
In~\cite{bley2024determinationstablyfreecancellation}, determination of the stably free cancellation property for $\Z$-orders is studied, where maps between unit groups $(\Lambda/I)^\times$ play an important role. Note that in loc. cit. this is solved in a naive way by embedding the unit group into a symmetric group or a matrix group over a finite field if possible (see~\Cref{sec:computations} for a comparison).
Our result on unit groups of finite rings will include all of the aforementioned results as special cases (see~\Cref{cor:arithmetic}).

For certain classes of finite rings the unit group has a natural representation as a specific type of finite group, for example as a finitely generated abelian group in case of commutative rings or as a linear group over a finite field for $\F_p$-algebras.
Since for arbitrary finite rings, there does not seem to be a natural representation of the unit group apart from the (prohibitively expensive) regular permutation representation, we determine the unit group as a finitely presented group.
More precisely, we determine an effective presentation of the unit group, which includes both a finitely presented group and an algorithm to express units as words in the generators (see \Cref{sec:prelim} for the precise definition).
The first aim of this note is to show that for arbitrary finite rings the aforementioned ``number theoretic obstacles'' from the commutative case are the only obstacles to finding effective presentations (see~\Cref{thm:mainmain}):

\begin{introtheorem}\label{thm:intro1}
  The following computational problems are probabilistic polynomial-time reducible to each other:
\begin{enumerate}
  \item
    \Pfactor{} and \Pdisclog,
  \item
    computing effective presentations of unit groups of finite rings.
\end{enumerate}
\end{introtheorem}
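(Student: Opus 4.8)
The plan is to establish the asserted equivalence by exhibiting probabilistic polynomial-time reductions in both directions. The direction from the number-theoretic problems to unit-group computation is the easy one, and it is essentially forced by the examples already recalled in the introduction. For \Pfactor{}, I would apply an effective presentation of $(\Z/n\Z)^\times$: from it one reads off the order $\lvert (\Z/n\Z)^\times \rvert = \varphi(n)$, and recovering $\varphi(n)$ is probabilistic polynomial-time equivalent to factoring $n$. For \Pdisclog{} over $\F_q$, I would take $R = \F_q$ and use the word-problem part of an effective presentation of the cyclic group $\F_q^\times$: writing both the given base and the target element as words in a presentation generator and combining the two exponents (inverting the base exponent modulo $q-1$ by the extended Euclidean algorithm, which is possible since the base is primitive) yields the desired discrete logarithm.

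The substantial direction is the reduction of unit-group computation to \Pfactor{} and \Pdisclog{}; this is where essentially all of the work lies, and I expect the proof of the main technical result (\Cref{thm:mainmain}) to carry it out. Given a finite ring $R$, I would first use \Pfactor{} on $\lvert R \rvert$ to split the additive group, and hence $R$ itself, into its primary components $R \cong \prod_p R_p$, reducing to the case where $R$ has prime-power order $p^k$. For such $R$ I would compute the Jacobson radical $\jac(R)$, a nilpotent ideal, together with the Wedderburn decomposition of the semisimple quotient, which by Wedderburn's little theorem takes the form $R/\jac(R) \cong \prod_i \mat_{n_i}(\F_{q_i})$ with each $q_i$ a power of $p$. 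These structural computations are effective --- factoring polynomials over finite fields is randomized polynomial time, and integer factorization is supplied by the oracle --- and they reduce the problem to the two pieces of the exact sequence
\[
  1 \to 1 + \jac(R) \to R^\times \to (R/\jac(R))^\times \to 1 .
\]

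For the semisimple quotient $(R/\jac(R))^\times \cong \prod_i \GL_{n_i}(\F_{q_i})$ I would use a known explicit finite presentation of each $\GL_{n_i}(\F_{q_i})$ (generated by transvections together with a single diagonal matrix built from a primitive root of $\F_{q_i}$, the latter obtained by factoring $q_i - 1$), and solve the constructive membership problem by Gaussian elimination: row- and column-reducing a matrix to the identity produces a word in the elementary and diagonal generators, and the field scalars that arise are expressed as powers of the primitive root by calls to \Pdisclog. The kernel $1 + \jac(R)$ is a finite $p$-group, for which I would build a power-commutator presentation from the $\jac(R)$-adic filtration: the successive quotients $(1 + \jac(R)^i)/(1 + \jac(R)^{i+1}) \cong \jac(R)^i/\jac(R)^{i+1}$ are elementary abelian, so the word problem is solved by peeling off one layer at a time using linear algebra over $\F_p$. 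Finally I would assemble an effective presentation of $R^\times$ from these two pieces by the standard presentation-of-an-extension construction, lifting the generators and relators of the quotient into $R^\times$ and recording the conjugation action on the normal subgroup; the resulting word problem for $R^\times$ factors through reduction modulo $\jac(R)$, constructive membership in the quotient, and then the layered word problem in $1 + \jac(R)$.

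The main obstacle, and the part demanding the most care, is the semisimple block together with the assembly step: one needs explicit polynomial-size presentations of $\GL_n(\F_q)$ whose correctness is guaranteed a priori (so that the presented group really is $R^\times$) and a constructive-membership routine whose only number-theoretic cost is the \Pdisclog{} calls for the field scalars, and one must verify that the extension construction preserves both the finiteness of the presentation and the polynomial-time evaluability of the isomorphism and its inverse. Propagating these guarantees through the product over primes $p$, and ensuring that every structural decomposition (primary splitting, radical, Wedderburn factors) is itself computable within the claimed reductions, is the bookkeeping that the detailed proof must discharge.
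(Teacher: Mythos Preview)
Your proposal is correct and follows essentially the same route as the paper: the reduction from \Pfactor{} and \Pdisclog{} to unit-group presentations uses $(\Z/n\Z)^\times$ and $\F_q^\times$ exactly as you describe, and the converse proceeds via primary decomposition, the Jacobson radical, the exact sequence $1 \to 1 + J \to R^\times \to (R/J)^\times \to 1$, a layered treatment of the unipotent part through the $J$-adic filtration, Wedderburn plus explicit $\GL_n(\F_q)$ presentations for the semisimple part, and an extension-of-presentations lemma to glue them. The only cosmetic differences are that the paper squares the ideal at each step (using $J^{2^i}$ rather than $J^i$) and handles $\GL_n(\F_q)$ via the split sequence $1 \to \SL_n \to \GL_n \to k^\times \to 1$; neither changes the substance of the argument.
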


(Variants of \Cref{thm:intro1} for finite quotients of arithmetic orders and finite-dimensional algebras over finite fields are described in~\Cref{cor:alg,cor:arithmetic}.)

The reduction from (i) to (ii) follows from a ``folklore'' probabilistic variant of an algorithm of Miller~\cite{MR480295}; see~\cite[10.4 \& 10.5]{MR2488898}.
Our main contribution is \Cref{thm:main}, where we prove that an effective presentation of the unit group of a finite ring $R$ can be determined in probabilistic polynomial time, once oracles for solving \Pfactor{} and \Pdisclog{} are available.
We give an overview of the reduction,
which is very much inspired by Bley--Boltje~\cite{MR2282916}, where an algorithm for determining generators of unit groups of finite quotient rings of $\Z$-orders is given.
Let $J$ be the Jacobson radical of $R$.
There is a canonical exact sequence 
\[ 1 \longrightarrow 1 + J \longrightarrow R^\times \longrightarrow (R/J)^\times \longrightarrow 1, \]
which shows that the unit group is an extension of the units of the semisimple ring $R/J$ (a product of matrix rings over finite fields) by $1 + J$, the unipotent elements of $R$.
The group $1 + J$ is solvable with abelian series defined by $1 + J^{2^i}$ and factors isomorphic to $J^{2^i}/J^{2{^i + 1}}$, $i \in \Z_{\geq 0}$.
To determine a finite presentation and to express elements in terms of the generators, we investigate the corresponding problems for $1 + J$ (using the aforementioned abelian series) and $(R/J)^\times$. This is then combined using a general extension
result for effective presentations of black box groups (\Cref{thm:patching}).

As an application, for a finite ring $R$ we consider the problem of determining $\ab{R}$ and the first K-group $\K1(R)$.
(For a group $G$, denote by $G^{\mathrm{ab}} = G/[G, G]$ the abelianization, that is, the maximal abelian quotient, of $G$.)
The construction of the first K-group goes back to work of Whitehead~\cite{MR35437} on simple homotopy types and was later formulated for rings by Bass--Samuel~\cite{MR152559}.
The abelian group $\K1(R)$ is by definition the abelianization $\GL(R)^{\mathrm{ab}}$ of the infinite general linear group $\varinjlim \GL_n(R)$ of $R$.
Both the abelianization of unit groups and the first K-group of finite rings play an important role in the study of noncommutative $\Z$-orders.
Applications include determining (reduced) projective class groups of integral group rings $\Z[G]$ (\cite{MR703486}) and the stably free cancellation property of $\Z$-orders (\cite{bley2024determinationstablyfreecancellation}).
One can deduce from Theorem~\ref{thm:intro1} that the computational problem of determining these invariants is as hard as computing unit groups themselves (see~\Cref{thm:mainmain}):

\begin{introtheorem}\label{thm:intro2}
  The following computational problems are probabilistic polynomial-time reducible to each other:
\begin{enumerate}
  \item
    computing effective presentations of unit groups of finite rings,
  \item
    computing abelianizations of unit groups of finite rings,
  \item
    computing $\K1$ of finite rings.
\end{enumerate}
\end{introtheorem}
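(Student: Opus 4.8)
The plan is to leverage \Cref{thm:intro1}, which identifies problem (i) with the pair of number-theoretic problems \Pfactor{} and \Pdisclog. It therefore suffices to show that (ii) and (iii) are each probabilistic polynomial-time equivalent to \Pfactor{} and \Pdisclog; together with the reduction from \Pfactor{} and \Pdisclog{} to (i) provided by \Cref{thm:main}, this closes the web of reductions. Concretely, I would establish the two cycles $(\mathrm{i}) \to (\mathrm{ii}) \to \{\Pfactor, \Pdisclog\} \to (\mathrm{i})$ and $(\mathrm{i}) \to (\mathrm{iii}) \to \{\Pfactor, \Pdisclog\} \to (\mathrm{i})$, where the last arrow in each cycle is \Cref{thm:main}.

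The reduction $(\mathrm{i}) \to (\mathrm{ii})$ is routine: given an effective presentation $\langle X \mid T \rangle$ of $R^\times$, abelianizing the relations yields an integer relation matrix whose Smith normal form produces the invariant factor decomposition of $\ab{R}$, while composing the word map of the effective presentation with reduction modulo $[R^\times, R^\times]$ expresses units in $\ab{R}$. The reductions $(\mathrm{ii}) \to \{\Pfactor, \Pdisclog\}$ and $(\mathrm{iii}) \to \{\Pfactor, \Pdisclog\}$ both proceed through commutative examples in which the unit group, its abelianization, and its $\K1$ all coincide. For \Pdisclog, take $R = \F_q$: here $\ab{R}$ and $\K1(R)$ both equal the cyclic group $\F_q^\times$, and expressing a given unit as a word in a generator is exactly the discrete logarithm. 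For \Pfactor, take $R = \Z/n\Z$: here $\ab{R}$ and $\K1(R)$ both equal $(\Z/n\Z)^\times$ (each local factor $\Z/p^e\Z$ having trivial $\mathrm{SK}_1$), so computing the order yields $\varphi(n)$, from which $n$ can be factored probabilistically \cite{MR480295,MR2488898}.

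The substantial step is $(\mathrm{i}) \to (\mathrm{iii})$, the computation of $\K1(R)$ from unit-group presentations. The key is that every finite ring has stable rank one: the Jacobson radical $J$ is nilpotent, so $\mathrm{sr}(R) = \mathrm{sr}(R/J)$, and $R/J$ is semisimple, hence of stable rank one. By the classical stability theorems of Bass and Vaserstein together with the Whitehead lemma, this forces the elementary subgroup to satisfy $E_n(R) = [\GL_n(R), \GL_n(R)]$ and $\GL_n(R)/E_n(R) \cong \K1(R)$ for all $n \geq 3$, so that
\[ \K1(R) \cong \GL_3(R)^{\mathrm{ab}} = \ab{(\mat_3(R))}. \]
Since $\mat_3(R)$ is again a finite ring, of size $\lvert R \rvert^{9}$, an effective presentation of $\K1(R)$ is obtained by applying (i) to $\mat_3(R)$ and abelianizing exactly as in $(\mathrm{i}) \to (\mathrm{ii})$. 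To express a given $g \in \GL_n(R)$ in $\K1(R)$, I would exploit that stable rank one makes the reduction $\GL_n(R) = \GL_1(R)\,E_n(R)$ effective: elementary row and column operations bring $g$ to the form $\mathrm{diag}(u, 1, \ldots, 1)$ with $u \in R^\times$, whereupon $[g] = [\mathrm{diag}(u, 1, 1)]$ is read off in $\GL_3(R)^{\mathrm{ab}}$. The main obstacle is precisely this stabilization bookkeeping: one must verify that the $\K1$-stability applies uniformly to all finite rings, so that the constant bound $N = 3$ suffices independently of the matrix blocks of $R/J$, and that the stable-rank-one reduction of a given invertible matrix can be carried out in polynomial time over a finite ring.

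Assembling these, (ii) and (iii) are each sandwiched between (i) and $\{\Pfactor, \Pdisclog\}$, which are equivalent by \Cref{thm:intro1}; hence (i), (ii), and (iii) are all probabilistic polynomial-time reducible to each other.
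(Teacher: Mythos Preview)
Your proposal is correct and matches the paper's approach: both obtain $\ab{R}$ from an effective presentation via the Smith normal form of the relation matrix, both compute $\K1(R)$ as the abelianization of $\GL_3(R) = \mat_3(R)^\times$, and both close the loop back to \Pfactor{} and \Pdisclog{} through the commutative rings $\Z/n\Z$ and $\F_q$. The only cosmetic differences are that the paper organizes the reductions as a single cycle $(\mathrm{i})\to(\mathrm{ii})\to(\mathrm{iii})\to\{\Pfactor,\Pdisclog\}\to(\mathrm{i})$ (so $\K1$ is reduced directly to abelianization rather than to effective presentations), and that it cites Milnor's \cite[Prop.~52]{MR629979} for $\K1(R)\cong\GL_3(R)^{\mathrm{ab}}$ instead of arguing via stable rank one.
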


Our interest in algorithms for unit groups of finite rings is motivated by the investigation of noncommutative $\Z$-orders.
To this end, we have implemented the algorithms of this paper in the computer algebra systems \textsc{Hecke}~\cite{FHHJ2017} and~\textsc{OSCAR}~\cite{OSCAR, OSCAR-book}.
Although \Pfactor{} and \Pdisclog{} are considered to be hard, 
state-of-the-art computer algebra systems incorporate sophisticated implementations (with heuristic subexponential complexity) that are able to solve these problems for many parameter sizes of interest.
In combination with our reduction, this leads to an implementation for computing unit groups of finite rings, which performs quite well in practice, substantially improving upon the naive version.
For example, we have determined the unit groups and first K-groups of the group rings $\F_2[G]$ for all non-abelian groups of even order up to $32$.
See~\Cref{sec:computations} for details.

We end this introduction with an outline of the manuscript.
In Section~\ref{sec:prelim} we review the encoding of the objects involved, and introduce the notion of an effective presentation of a black box group.
Section~\ref{sec:ext} contains the main algorithmic tool on extensions of black box groups,
which is then applied to unit groups of finite rings in Section~\ref{sec:unit}.
The applications to abelianization and first K-group is given in Section~\ref{sec:abinv}.
In Section~\ref{sec:compproblems}, we use known results on finite commutative rings to obtain the ``reverse'' reductions in Theorems~\ref{thm:intro1} and~\ref{thm:intro2}.
Specializations to finite quotients of arithmetic orders, including orders of number fields and finite-dimensional algebras over finite fields, can be found in~\cref{sec:applications}.
Finally, details on the implementation and applications are given in Section~\ref{sec:computations}.

\subsection*{Acknowledgments}
The author wishes to thank Thomas Breuer, Claus Fieker and Max Horn for helpful discussions on computational group theory and and is grateful for numerous helpful comments and corrections from Werner Bley, Henri Johnston and Wilken Steiner.

\subsection*{Funding}
The author gratefully acknowledges support by the Deutsche Forschungsgemeinschaft —
Project-ID 286237555 – TRR 195; and Project-ID 539387714.

\section{Preliminaries}\label{sec:prelim}

We briefly recall the conventions that we will use for the complexity analysis of our algorithms. For details we refer the reader to Lenstra~\cite{Lenstra1992} or Cohen~\cite[\S{}1.1]{MR1228206}.
We are solely using bit complexity to analyze the running time of our algorithms. %
An algorithm is \textit{polynomial-time} if the runtime is in $l^{O(1)}$, where $l$ is the length of the input.
By a probabilistic algorithm we mean an algorithm that may call a random number generator. A probabilistic algorithm is polynomial time, if the expected running time is polynomial in the length of the input.
By abuse of notation, we will often speak of the length of an object $X$, by which we mean the length of the encoding of $X$.
We denote the problem of factoring integers as \Pfactor, the problem of computing a generator of the multiplicative group of a finite field as \Pprimroots{} and the problem of computing discrete logarithms in finite fields as \Pdisclog; see~\cite{MR1745660}.

\subsection{Abelian groups and finite rings}

Finite abelian groups will be encoded as a sequence of positive integers $d_1,\dotsc,d_n$, specifying the group $\Z/d_1\Z \times \dotsb \times \Z/d_n\Z$; and their elements as integers $x_1,\dotsc,x_n$ with $0 \leq x_i < d_i$ for $1 \leq i \leq n$.
Note that a finitely generated abelian group given by generators and relators can be transformed into this presentation in polynomial time by means of the Smith normal form~\cite{MR573842}.
Using this encoding of finite abelian groups, one can derive encodings for group homomorphisms, subgroups or quotient groups,
for which many computational tasks can be performed in deterministic polynomial time; see~\cite{Lenstra1992} and~\cite[Chapter 2]{ciocaneateodorescu:tel-01378003}.

The encoding of finite abelian groups and their morphisms extends to finite rings. A finite ring $R$ is encoded as a finite abelian group $R^+$ together with the multiplication map $m \colon R^+ \times R^+ \to R^+$.
Distributivity of multiplication over addition implies that $m$ is bilinear and thus induces a homomorphism $R^+ \otimes_{\Z} R^+ \to R^+$ of abelian groups.
By appealing to the underlying abelian group, we also obtain encodings for ideals, subrings, quotient rings and ring morphisms.
As in the case of finite abelian groups, many basic questions about these objects can be answered in deterministic polynomial time; see~\cite[Chapter 3]{ciocaneateodorescu:tel-01378003}.
Note that the encoding of a finite ring $R$ has length bounded by $\log(\lvert R \rvert)^3$.
Thus a (probabilistic) polynomial time algorithm which has input a finite ring $R$ is one with complexity in $\log(\lvert R \rvert)^{O(1)}$.

\subsection*{Black box groups and their presentations}

If $R$ is a finite ring, then the inclusion $R^\times \subseteq R$ provides a (unique) way to encode elements of the unit group. Moreover, multiplication of the ring provides us with a way to perform the group operation in $R^\times$ in polynomial time.
These properties allows us to consider $R^\times$ as a black box group, as introduced in~\cite{DBLP:conf/focs/BabaiS84}.
We will use the following modified definition of a black box group: 
A black box group $G$ consists of a set of elements with finite encoding of bounded length together with polynomial time algorithms to perform the group operation, inversion and to test equality.
Based on this, we can also represent maps between black box groups, for which we in addition require that one can determine preimages: If $f \colon G \to H$ is a map of black box groups and $h \in \image(f)$, one can find in polynomial time an element $g \in G$ with $f(g) = h$.

We are interested in computing finite presentations of black box groups. A finitely presented group $P = \langle X \,|\, T \rangle$ will be encoded by the number of generators $\lvert X \rvert$ and the set of relators $T$, where relators are encoded as elements of $(X \cup X^{-1})^*$, that is, as words over the alphabet $(X \cup X^{-1})$, with binary encoded exponents.
We thus use the ``bit-length'' of the relators, as opposed to the ``word length'', which amounts to using unary encoded exponents. Note that both encodings are equivalent from a complexity point of view (see~\cite[1.2]{MR2393425}), but in our context it is more convenient to work with binary encoded exponents.
An \textit{effective presentation} of a black box group $G$ consists of
\begin{itemize}
  \item a finitely presented group $P = \langle X  \,|\, T \rangle$,
  \item a probabilistic polynomial-time algorithm, that given $g \in G$ determines a word $\alpha(g) \in (X \cup X^{-1})^*$,
  \item a probabilistic polynomial-time algorithm, that given $w \in (X \cup X^{-1})^*$ determines an element $\alpha^{-1}(w) \in G$,
  \item such that $\alpha$ induces an isomorphism $G \to P, g \mapsto \alpha(g)$ with inverse induced by $w \mapsto \alpha^{-1}(w)$.
\end{itemize}
By abuse of language, we refer to $\alpha$ as the \textit{discrete logarithm} and $\alpha^{-1}$ as the \textit{exponential map} of the effective presentation.
Informally speaking, determining an effective presentation of a black box group $G$ amounts to finding a generating set of $G$ and a probabilistic polynomial-time algorithms to express arbitrary elements as words in those generators.
In the following, for a black box group $G$ we will speak of an effective presentation $\alpha \colon G \to P$ with the existence of the required algorithms implied.
The computational problem is now:

\begin{problem*}
  Given a finite ring $R$, determine an effective presentation $\alpha \colon R^\times \to \langle X \, | \, T \rangle$ of the unit group, considered as a black box group.
\end{problem*}

\begin{remark}
  Note that by just asking for an algorithm that, given a finite ring $R$, returns a presentation and an algorithm which expresses elements of $R^\times$ in terms of the generators, the problem can be phrased without the notion of black box groups.
  We introduce the notion of presentations of black box groups as a convenient bookkeeping device to keep track of the various objects, which behaves well with respect to extensions (see~\Cref{thm:patching}) and simplifies the exposition. 
  The final result (\Cref{thm:main}) itself is independent of this notion.
  For the same reason we use a stronger notion of black box groups then the original one from~\cite{DBLP:conf/focs/BabaiS84}. It is more convenient to work with and all the black box groups we consider satisfy the stronger assumptions.
\end{remark}

\begin{example}\label{example:blackbox}
  \hfill
  \begin{enumerate}
    \item
    For a finite ring $R$, the multiplicative group $R^\times$ (together with the encoding provided by $R^\times \subseteq R$) is a black box group.
    Now let $I$ be a two-sided nilpotent ideal of $R$.
    Then $1 + I \subseteq R^\times$ is a black box group.
    Also, if $J$ is another two-sided nilpotent ideal and $1 + J$ a normal subgroup of $1 + I$, then the factor group $(1 + I)/(1 + J)$ is a black box group (without unique encoding).
    \item
    Any finite abelian group $A = \Z/n_1\Z \times \dotsb \times \Z/n_r \Z$ can be regarded as a black box group.
    The group $A$ is isomorphic to a finitely presented group with $r$ generators and $r + r(r -1)/2$ relators,
    \[ A \longrightarrow \langle x_1, \dotsc,x_r \mid x_i^{n_i},\, [x_i,x_j], \text{ for $1 \leq i < j \leq r$} \rangle, \, (a_1,\dotsc,a_r) \mapsto x_1^{a_1} \dotsm x_r^{a_m}. \] 
    As both the discrete logarithm and exponential map can be evaluated in polynomial time, this is an effective presentation of $A$.
  \item
    Let $G = \F_q^\times$ be the multiplicative group of a finite field of order $q$, which is cyclic of order $q - 1$ and thus isomorphic to $\langle x \, | \, x^{q - 1} \rangle$.
    Turning this into an effective presentation reduces (by definition) in polynomial time to \Pprimroots{} and \Pdisclog.
  \end{enumerate}
\end{example}

\begin{lemma}\label{lem:slnk}
  The problem of determining an effective presentation of $\SL_n(k)$ given a finite field $k$ and $n \in \Z_{\geq 0}$ reduces in polynomial time to $\Pprimroots$ and $\Pdisclog$.
\end{lemma}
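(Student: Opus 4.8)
The plan is to realize $\SL_n(k)$ through its elementary transvections and the Steinberg relations, and to implement the discrete logarithm by Gaussian elimination. Write $q = \lvert k\rvert = p^f$. First I would extract from the given ring structure on $k$ the characteristic $p$ (the additive exponent), an $\F_p$-basis $b_1,\dots,b_f$ of $k$, and the structure constants $c^r_{lm}\in\F_p$ determined by $b_l b_m = \sum_r c^r_{lm} b_r$; all of this, as well as addition, multiplication and inversion in $k$, is deterministic polynomial time by linear algebra over $\F_p$ (inversion of $x$ amounts to solving the $\F_p$-linear system $xy=1$). For $i\neq j$ and $t\in k$ let $e_{ij}(t)=1+tE_{ij}$ be the elementary transvection. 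As generating set $X$ I take the $n(n-1)f$ symbols $x^{(l)}_{ij}$ ($1\le i\neq j\le n$, $1\le l\le f$), intended to map to $e_{ij}(b_l)$; this is polynomial in $n$ and $\log q$.

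For the relator set $T$ I would record, for each ordered pair $(i,j)$, the relations presenting the root subgroup $\{e_{ij}(t)\}\cong (k,+)\cong(\Z/p)^f$ (mutual commutativity of the $x^{(l)}_{ij}$ together with $p$-th power relations, the exponent written in binary), and the Steinberg commutator relations $[x^{(l)}_{ij},x^{(m)}_{jk}]=\prod_r (x^{(r)}_{ik})^{c^r_{lm}}$ for distinct $i,j,k$, as well as $[x^{(l)}_{ij},x^{(m)}_{kh}]=1$ whenever $j\neq k$ and $i\neq h$. Each relator is computed from the structure constants, so $T$ has polynomial size. Let $P=\langle X\mid T\rangle$ and let $\phi\colon P\to\SL_n(k)$ send $x^{(l)}_{ij}\mapsto e_{ij}(b_l)$. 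Surjectivity holds because transvections generate $\SL_n(k)$. For injectivity I would invoke that, by construction, $P$ is (for $n\ge 3$) the Steinberg group $\mathrm{St}_n(k)$ and that the natural map $\mathrm{St}_n(k)\to\SL_n(k)$ is an isomorphism since $\mathrm{K}_2(\F_q)=0$ (Matsumoto). The cases $n\le 1$ give the trivial group, and $n=2$ I would handle by substituting a known finite presentation of $\SL_2(\F_q)$ (adding the Weyl and diagonal relations), again realized on the chosen basis.

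It remains to give the two maps. The exponential map $\alpha^{-1}$ just evaluates a word in the $x^{(l)}_{ij}$ by multiplying the corresponding transvection matrices, which is polynomial time. For the discrete logarithm $\alpha$, given $g\in\SL_n(k)$ I would run Gaussian elimination using only transvections: clear each column by row operations $e_{ij}(t)$, arranging nonzero pivots by adding suitable rows, reducing $g$ to a diagonal matrix $\mathrm{diag}(d_1,\dots,d_n)$ with $\prod_i d_i=1$, and then rewrite this diagonal matrix as an explicit product of transvections via the standard $\SL_2$-block identities (each block $\mathrm{diag}(d,d^{-1})$ expressed through $e_{12}(\,\cdot\,)$, $e_{21}(\,\cdot\,)$ and the field elements $d,d^{-1}$). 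Expanding every occurring entry $t\in k$ in the basis $b_1,\dots,b_f$ turns the resulting product into a word in $X$ of length polynomial in $n$ and $\log q$; all steps use only field arithmetic.

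The main obstacle is the correctness of the presentation rather than the maps: one must check that the Steinberg relations genuinely suffice, which for $n\ge 3$ rests on $\mathrm{K}_2(\F_q)=0$ and for $n=2$ requires citing a concrete presentation of $\SL_2(\F_q)$ together with an explicit transvection expression of its diagonal torus. I would also remark that the whole construction is in fact deterministic polynomial time once arithmetic in $k$ is available, and the latter is polynomial time; thus the reduction to $\Pprimroots$ and $\Pdisclog$ holds a fortiori. If one prefers a presentation in which the multiplicative structure of $k$ is made explicit, $\Pprimroots$ supplies a primitive root $\zeta$ and $\Pdisclog$ lets one rewrite basis products through powers of $\zeta$, but neither is needed for $\SL_n(k)$ itself.
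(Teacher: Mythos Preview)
Your approach for $n \geq 3$ via the Steinberg presentation on the transvections $e_{ij}(b_l)$ together with Gaussian elimination is essentially the paper's approach; the paper cites an explicit reference for the relator count rather than invoking $K_2(\F_q) = 0$, but the content is the same, and the paper likewise observes (in a remark following the lemma) that neither \Pprimroots{} nor \Pdisclog{} is needed when $n \neq 2$.

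The gap is the case $n = 2$. Saying you would ``substitute a known finite presentation of $\SL_2(\F_q)$ (adding the Weyl and diagonal relations), again realized on the chosen basis'' does not suffice: you must exhibit a presentation of length polynomial in $\log q$, show it can be \emph{computed} in polynomial time, and explain how to express arbitrary elements in its generators. The Steinberg commutator relations you listed are vacuous for $n=2$ (there is no third index available), and the standard additional relations encode the multiplicative structure of the torus $\{\operatorname{diag}(t,t^{-1}) : t \in k^\times\} \cong k^\times$. Writing those down with polynomially many relators appears to require a generator of $k^\times$. Concretely, the paper's treatment of $n=2$ uses \Pprimroots{} to obtain a primitive root $\omega$ (which enters via the generator $\delta = \operatorname{diag}(\omega^{-1},\omega)$) and uses \Pdisclog{} both to compute the relators of the cited presentation and to convert between that generating set and the transvections $e_{ij}(\alpha^r)$. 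Your final assertion that ``neither is needed for $\SL_n(k)$ itself'' is therefore unjustified for $n=2$, and as it stands your sketch for this case is not yet a polynomial-time construction.
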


\begin{proof}
    We may assume that $n \geq 2$.
    The field $k$, a finite extension of some prime field $\F_p$ of degree $m$, is encoded as a simple extension $\F_p(\alpha)$ via a minimal polynomial in $\F_p[T]$, and thus of length $m \cdot \log(p) = \log(\lvert k \rvert)$.
    For $i, j \in \{1,\dotsc,n\}$ with $i \neq j$ and $\beta \in k$ denote by $e_{ij}(\beta)$ the elementary matrix with $1$'s on the diagonal and $\beta$ at position $(i, j)$.
    Since $k$ is a field, generated as an abelian group by $1,\alpha,\dotsc,\alpha^{m-1}$, $\SL_n(k)$ is generated by the elementary matrices $e_{ij}(\alpha^r)$, $i \neq j$, $0 \leq r < m$.
    Also, by applying row and column operations, there exists a polynomial-time algorithm that expresses an element $\SL_n(k)$ as a word in these generators.

  For $n \geq 3$, we can now use a result of~\cite{MR1871620}, where it is proven that $\SL_n(k)$ has a presentation corresponding to the generating set $e_{ij}(\alpha^r)$, with at most $4 n^4 m^3$ relators, which can be determined in polynomial time.

  For $n = 2$, we consider the presentations from~\cite[Sec. 3]{MR4044701} (which are derived from~\cite{MR1033184} using a different generating set): Determine a primitive element $\omega \in k^\times$ (using \Pprimroots{}) and let
    \[ \tau = \begin{pmatrix} 1 & 1 \\ 0 & 1 \end{pmatrix}, \, \delta = \begin{pmatrix} \omega^{-1} & 0 \\ 0 & \omega \end{pmatrix}, \, U = \begin{pmatrix} 0 & -1 \\ 1 & 0 \end{pmatrix} \in \SL_2(k). \] 
    Then $\SL_2(k)$ is generated by $\tau$, $\delta$ and $U$. Moreover, $\SL_2(k)$ has a presentation corresponding to those generators with at most 10 relators, which can be determined by computing discrete logarithms in $k^\times$ with respect to $\omega$ or $\omega^2$ (using \Pdisclog{}).
    To turn this into an effective presentation of $\SL_2(k)$, it suffices to express $e_{ij}(\alpha^r)$ in terms of $\tau$, $\delta$ and $U$.
    As
    $\delta^{-1}\tau \delta^{-1} = e_{12}(\omega^2)$ and $e_{21}(1) = u^{-1}\tau^{-1}u$,
    this can be done by computing discrete logarithms in $k^\times$ (using \Pdisclog{}).
\end{proof}

\begin{remark}
  In the proof of~\Cref{lem:slnk} we have made use of the (modified) Steinberg presentation (in case $n \geq 3$) and not other known presentations, like the ones presented in~\cite{MR1033184, MR2393425, MR2746771, MR4044701}, which are often ``shorter''.
  This is mostly due to the fact that for the Steinberg presentation it is straightforward to express arbitrary elements as words in the generators.
  For the presentations from~\cite{MR4044701}, which are closely related to the constructive matrix group recognition problem, it is also known how to express elements in terms of the generators, but these results employ the notion of straight-line programs (instead of words) and are thus not directly applicable in our setting.
  Using the Steinberg presentation also allows for the following curious result: 
  For $n \neq 2$, neither \Pprimroots{} nor \Pdisclog{} were needed in the proof of~\Cref{lem:slnk}.
  Thus, there exists a polynomial-time algorithm that given a finite field $k$ and $n \in \Z_{\geq 0}$, $n \neq 2$, determines an effective presentation of $\SL_n(k)$.
\end{remark}

\section{Extensions of effective presentations}\label{sec:ext}

The black box groups we consider and for which we want to determine effective presentations are usually extensions of smaller black box groups.
To use this algorithmically, in this section
we show that black box groups with effective presentations are closed under extensions.
To this end, we extend the well-known construction of presentations for extensions of finitely presented groups, as found for example in~\cite{MR2129747} or~\cite{MR1472735}.

\begin{theorem}\label{thm:patching}
  There exists an algorithm, that given an exact sequence
  \[ 1 \longrightarrow N \xrightarrow{\,\,\,\iota\,\,\,}{} G \xrightarrow{\,\,\,\pi\,\,\,}{} H \longrightarrow 1 \] 
  of black box groups and effective presentations of $N$ and $H$, determines an effective presentation of $G$ in polynomial time.
  Moreover, if $H$ has presentation $P_H = \langle X \, | \, T \rangle$ and $N$ presentation $P_N = \langle Y \, | \, U \rangle$,
  then the presentation $P_G = \langle Z \, | \, V \rangle$ constructed by the algorithm satisfies
  \[ \lvert Z \rvert \leq \lvert X \rvert + \lvert Y \rvert \text{ and } \lvert V \rvert \leq \lvert U \rvert + \lvert T \rvert + \lvert X \rvert \cdot \lvert Y \rvert. \] 
\end{theorem}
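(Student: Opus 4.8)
The plan is to make the classical construction of a presentation for a group extension (as found in~\cite{MR2129747, MR1472735}) effective and to check that every step runs in polynomial time. First I would exploit the map $\pi$: since $\pi$ is a surjection of black box groups, for each generator $x \in X$ of $H$ I can find in polynomial time a preimage $\tilde x \in G$ with $\pi(\tilde x) = \alpha_H^{-1}(x)$. Taking $Z = X \sqcup Y$ and sending the symbol $x$ to $\tilde x$ and the symbol $y$ to $\iota(y)$ gives a generating set of $G$, so that $\lvert Z \rvert \le \lvert X \rvert + \lvert Y \rvert$; here one uses that the lifts of the $H$-generators together with $\image(\iota)$ generate $G$, which is immediate from exactness.

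The relator set $V$ then consists of three families. First, the relators $U$ of $N$, read over $Y$; these hold in $G$ because $\iota$ is an injective homomorphism. Second, for each $t \in T$ the element $t(\tilde x) \in G$ obtained by substituting the lifts into $t$ lies in $\ker(\pi) = \image(\iota)$, so finding its $\iota$-preimage and applying the discrete logarithm $\alpha_N$ yields a word $w_t$ over $Y$ with $t(\tilde x) = \iota(\alpha_N^{-1}(w_t))$; the relator is $t\, w_t^{-1}$, contributing $\lvert T \rvert$ relators. Third, for each pair $(x,y) \in X \times Y$, normality of $\image(\iota)$ gives $\tilde x\, \iota(y)\, \tilde x^{-1} \in \image(\iota)$, and the same procedure produces a word $v_{x,y}$ over $Y$; the relator is $x y x^{-1} v_{x,y}^{-1}$, contributing $\lvert X \rvert \cdot \lvert Y \rvert$ relators. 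This gives the bound $\lvert V \rvert \le \lvert U \rvert + \lvert T \rvert + \lvert X \rvert \cdot \lvert Y \rvert$. That these relators actually present $G$ is the classical extension theorem; note that only one direction of conjugation is needed in the third family, since together with $U$ each such relator realizes conjugation by $\tilde x$ as an automorphism of $N$, whose inverse is then automatically expressible over $Y$.

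To upgrade $P_G = \langle Z \mid V \rangle$ to an effective presentation, I would build the discrete logarithm and exponential map of $G$ from those of $N$ and $H$. Given $g \in G$, compute $\alpha_H(\pi(g))$, reinterpret it as a word over $X \subseteq Z$, and substitute the lifts to obtain a lift $\tilde g \in G$; then $g\tilde g^{-1} \in \image(\iota)$, and applying $\alpha_N$ to its $\iota$-preimage gives a word over $Y$. Their concatenation, in the order dictated by the factorization $g = \iota(n)\,\tilde g$, is $\alpha_G(g)$. Conversely, $\alpha_G^{-1}$ evaluates a word over $Z$ by substituting $\tilde x$ for $x$ and $\iota(y)$ for $y$ and multiplying out in $G$; because exponents are binary encoded, this is carried out by repeated squaring.

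The main obstacle is the complexity bookkeeping rather than the algebra. I would have to verify that the words $w_t$ and $v_{x,y}$ have polynomially bounded bit-length, which holds because the effective presentation of $N$ produces discrete logarithms of polynomial length, and that evaluating the relators as well as the maps $\alpha_G^{\pm 1}$ remains polynomial-time despite binary exponents, again via repeated squaring. Granting the effective presentations of $N$ and $H$ together with the preimage algorithms for $\iota$ and $\pi$, each of the finitely many relator computations and each evaluation of $\alpha_G$ or $\alpha_G^{-1}$ is a composition of polynomially many polynomial-time black box operations, so the entire construction runs in polynomial time and yields the asserted bounds on $\lvert Z \rvert$ and $\lvert V \rvert$.
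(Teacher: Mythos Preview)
Your proposal is correct and follows essentially the same construction as the paper: take $Z = X \sqcup Y$, lift the $H$-generators through $\pi$, adjoin the relators of $N$, the lifted relators of $H$ rewritten into $\iota(N)$ via $\alpha_N$, and the conjugation relators for each $(x,y)$, then build $\alpha_G^{\pm 1}$ exactly as you describe. Aside from the harmless abuse of writing $\iota(y)$ for $\iota(\alpha_N^{-1}(y))$, your argument matches the paper's proof and even makes the repeated-squaring point for binary exponents more explicit.
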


\begin{proof}
  Let $\alpha_N \colon N \to P_N$ and $\alpha_H \colon H \to P_H$ be the given effective presentations.
  We will construct an effective presentation $\alpha_G \colon G \to P_G$ and maps $\iota'$, $\pi'$ such that the diagram
  \[ 
\begin{tikzcd}
1 \arrow[r] & N \arrow[r, "\iota"] \arrow[d, "\alpha_N"]  & G \arrow[r, "\pi"] \arrow[d, dotted, "\alpha_G"] & H \arrow[r] \arrow[d, "\alpha_H"] & 1 \\
1 \arrow[r] & P_N \arrow[r, "\iota'", dotted] & P_G \arrow[r, "\pi'", dotted]        & P_H \arrow[r]         & 1 
\end{tikzcd}
\]
  commutes.
  We let $Z = X \sqcup Y$ be the disjoint union of $X$ and $Y$, which we consider as subsets of $Z$.
  For each $x \in X$ let $g_x \in G$ be an element with $\alpha_H(\pi(g_x)) = x$.
  Similarly, for each $y \in Y$ let $g_y \in G$ be the element $g_y = \iota(\alpha_N^{-1}(y))$.
  Then $G$ is generated by $g_z$, $z \in Z$.

  Given a relator $r \in T \subseteq (X \cup X^{-1})^\ast$, we can evaluate $r$ at $(g_x)_{x \in X}$ to obtain an element $g \in G$.
  Since $\pi(g) = 1$ by construction, we can determine $n \in N$ with $\iota(n) = g$.
  We set $w_r = \alpha_N(n)^{-1} r \in (Z \cup Z^{-1})^\ast$.
  Now let $x \in X$ and $y \in Y$. Since $\iota(N)$ is normal in $G$, we have
  $g_x g_y g_x^{-1} \in \iota(N)$ and we can find $w_{x, y} \in (Y \cup Y^{-1})^\ast$ with
  $\iota(\alpha_N^{-1}(w_{x, y})) = g_x g_y g_x^{-1}$.
  Let $V = \{ w_r, s, w_{x, y}^{-1}xyx^{-1} \mid x \in X, y \in Y, r \in T, s \in U \}$.
  It follows from~\cite[Proposition 2.55]{MR2129747} that the finitely presented group $P_G = \langle Z \mid V \rangle$ is isomorphic to $G$, an explicit isomorphism being induced by $x \mapsto g_x, y \mapsto g_y$.
  Clearly $V$ and thus $P_G$ can be constructed in polynomial time from the presentations of $N$ and $H$ as well as the maps $\iota$ and $\pi$.
  Note that the map $\iota' \colon P_N \to P_G$ is induced by the inclusion $X \subseteq Z$ and $\pi' \colon P_G \to P_N$ is induced by partially evaluating a word in $Z = X \sqcup Y$ by substituting $y \mapsto 1$ for $y \in Y$. In particular both maps can be evaluated in polynomial time.
  
  It remains to describe the discrete logarithm and the exponential map of this presentation of $G$.
  For the exponential map $\alpha_G^{-1} \colon P_G \to G$, let $w \in (Z \cup Z^{-1})^*$ be a word in $Z$. As $Z$ is the disjoint union of $X$ and $Y$, we can evaluate $w$ at $x \mapsto g_x$ and $y \mapsto g_y$ for $x \in X$ and $y \in Y$, yielding the element $\alpha_G^{-1}(w) \in G$.

  For the discrete logarithm $\alpha_G \colon G \to P_G$, consider an element $g \in G$.
  Then $\alpha_H(\pi(g)) \in P_H$ and we can find $w' \in (X \cup X^{-1})^\ast$ with $\pi'(w') = \alpha_H(\pi(g))$. 
  Considering $w'$ as a word in $(Z \cup Z^{-1})^*$, we can apply the exponential map to obtain $g' = \alpha_G^{-1}(w') \in G$, which by construction is an element of $\ker(\pi)$. Hence we can find $n \in N$ with $g' = \iota(n)$ for some $n \in N$.
  Then $\alpha_G(g) = \iota'(\alpha_N(n))^{-1} w'$.
  Note that both the discrete logarithm and the exponential map can be evaluated in polynomial time. The only operations performed with words are concatenation or evaluation at elements of black box groups.
\end{proof}

\begin{example}\label{example:gln}
  Let $k$ be a finite field, let $n \in \Z_{\geq 0}$ and let $\GL_n(k)$ be the general linear group over $k$ considered as a black box group. The determinant map $\det \colon \GL_n(k) \to k^\times$ induces an exact sequence
  \[ 1 \longrightarrow k^\times \longrightarrow \GL_n(k) \longrightarrow \SL_n(k) \longrightarrow 1 \]
  of black box groups. By Example~\ref{example:blackbox} and Lemma~\ref{lem:slnk}, determining effective presentations of $k^\times$ and $\SL_n(k)$ reduces in polynomial time to \textsc{PrimitiveRoots} and \textsc{DiscreteLogarithm}.
  Hence by Theorem~\ref{thm:patching}, the same is true for $\GL_n(k)$.
\end{example}

For later use, we also record the following nested version of Theorem~\ref{thm:patching}.

\begin{cor}\label{cor:patchingmany}
  Given black box groups $G_0, \dotsc,G_l$, $A_1,\dotsc,A_l$ with
  \begin{itemize}
    \item
    $G_0 = \{ 1\}$,
    \item
    for each $1 \leq i \leq l$ an exact sequence
    \[ 1 \to G_{i-1} \to G_i \to A_{i} \to 1 \]
    of black box groups,
    \item
    effective presentations for the $A_i$ with $\leq n$ generators and $\leq n^2$ relators,
  \end{itemize}
  one can construct a presentation of $G_l$ in polynomial time, and the presentation has at most $2l^2n^2$ relators and $ln$ generators.
\end{cor}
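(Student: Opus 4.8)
The plan is to build an effective presentation of $G_l$ by iterating Theorem~\ref{thm:patching} along the given filtration. I start from the trivial presentation of $G_0 = \{1\}$ (no generators, no relators) and, for $i = 1, \dotsc, l$, feed the exact sequence $1 \to G_{i-1} \to G_i \to A_i \to 1$ into Theorem~\ref{thm:patching}, taking $G_{i-1}$ with its already-constructed presentation as the subgroup $N$ and $A_i$ with its given presentation as the quotient $H$. Each such step produces an effective presentation $P_{G_i} = \langle Z_i \mid V_i \rangle$ of $G_i$, and after $l$ steps I obtain one of $G_l$.

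It then remains to track the sizes. Write $g_i = \lvert Z_i \rvert$ and $r_i = \lvert V_i \rvert$, and recall that at step $i$ the subgroup $G_{i-1}$ contributes $g_{i-1}$ generators and $r_{i-1}$ relators while $A_i$ contributes $\leq n$ generators and $\leq n^2$ relators. The generator bound of Theorem~\ref{thm:patching} gives $g_i \leq g_{i-1} + n$, so $g_i \leq in$ and in particular $g_l \leq ln$. For the relators, the bound $\lvert V \rvert \leq \lvert U \rvert + \lvert T \rvert + \lvert X \rvert \cdot \lvert Y \rvert$ becomes
\[ r_i \leq r_{i-1} + n^2 + n \cdot g_{i-1} \leq r_{i-1} + n^2 + n \cdot (i-1)n = r_{i-1} + i n^2, \]
where I use $g_{i-1} \leq (i-1)n$ to control the cross term $\lvert X \rvert \cdot \lvert Y \rvert$. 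Summing from $i = 1$ to $l$ gives $r_l \leq \sum_{i=1}^{l} i n^2 = \tfrac{l(l+1)}{2} n^2 \leq 2l^2 n^2$, which is the stated bound.

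Finally I would confirm the polynomial-time claim. Each of the $l$ invocations of Theorem~\ref{thm:patching} runs in polynomial time, and by the estimates above the intermediate presentations stay of polynomial size (in particular the relators, produced in polynomial time at each step, have polynomially bounded bit length), so the whole construction of $P_{G_l}$ is polynomial. The one point requiring care is that the discrete logarithm of $P_{G_i}$ supplied by Theorem~\ref{thm:patching} calls the discrete logarithm of $G_{i-1}$, so these evaluation algorithms nest to depth $l$. I expect this to be the only genuine obstacle, and it is resolved by observing that the recursion is \emph{additive} rather than multiplicative: at each level Theorem~\ref{thm:patching} adds only one black-box discrete logarithm for $A_i$ together with a bounded number of black-box operations and word manipulations on top of the cost for $G_{i-1}$. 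Hence the running time of the discrete logarithm (and likewise the exponential map) for $G_l$ is a sum of $l$ polynomial contributions, which is polynomial because $l$ is itself bounded by the length of the input.
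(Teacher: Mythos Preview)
Your proof is correct and follows the same inductive application of Theorem~\ref{thm:patching} as the paper. Your bookkeeping is in fact tidier: the paper's intermediate inequality contains a slip (it writes $s_{i-1}n \leq i\cdot n$ where $i\cdot n^2$ is meant), whereas your estimate $r_i \leq r_{i-1} + in^2$ and the resulting bound $r_l \leq \tfrac{l(l+1)}{2}n^2$ are clean and even slightly sharper. Your additional paragraph justifying that the nested discrete-logarithm calls still run in polynomial time is a point the paper leaves implicit.
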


\begin{proof}
  This follows inductively from~\cref{thm:patching}.
  Denote by $s_i$ the number of generators and by $r_i$ the number of relators of the presentation of $A_i$.
  Then $s_i = s_{i-1} + n \leq i\cdot n$ and $r_i = r_{i-1} + n^2 + s_{i-1} n \leq r_{i-1} + n^2 + i \cdot n$ which shows that $s_l \leq l\cdot n$ and $r_l \leq l n^2 + l^2 n \leq 2l^2 n^2$.
\end{proof}

\section{Presentations and generators of unit groups}\label{sec:unit}

We now describe the computation of generators and effective presentations of unit groups of a finite rings,
beginning with unipotent units.

\subsection{Unipotent units.}

\begin{lemma}\label{lem:nilbasecase}
  There exists an algorithm, that given a finite ring $R$ and a nilpotent two-sided ideal $I$ of $R$, determines in polynomial time
  an effective presentation of the black box group $(1 + I)/(1 + I^2)$ with at most $r$ generators and $r^2$ relators, where $r = \rank(R^+)$, the minimal number of generators $R^+$.
\end{lemma}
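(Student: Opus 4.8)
The plan is to exploit the fact that $(1+I)/(1+I^2)$ is actually abelian and canonically isomorphic to the additive group $I/I^2$, which we already know how to handle effectively by \Cref{example:blackbox}(ii). The key observation is that for $a, b \in I$ we have $(1+a)(1+b) = 1 + a + b + ab$, and since $ab \in I^2$, the map $1 + a \mapsto a \bmod I^2$ descends to a well-defined group isomorphism
\[ \varphi \colon (1+I)/(1+I^2) \xrightarrow{\ \sim\ } I/I^2, \]
where the target is an additive (hence abelian) group. In particular $(1+I)/(1+I^2)$ is abelian, and $\varphi$ translates the multiplicative black box structure into the additive structure of the finite abelian group $I/I^2$.

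First I would make $I/I^2$ explicit as a finite abelian group. Since $R$ is given via its additive group $R^+$ and the bilinear multiplication, the ideal $I$ is encoded as a subgroup of $R^+$, and $I^2$ is the subgroup generated by all products $ab$ with $a,b \in I$; both $I$ and $I^2$ are computable as subgroups of the finite abelian group $R^+$ in deterministic polynomial time, and the quotient $I/I^2$ can be put into Smith normal form $\Z/d_1\Z \times \dotsb \times \Z/d_s\Z$, again in polynomial time. Crucially, because $I \subseteq R^+$, the number $s$ of invariant factors is at most $\rank(I) \le \rank(R^+) = r$, which gives the claimed bound of $r$ generators. Applying \Cref{example:blackbox}(ii) to $I/I^2$ yields an effective presentation with at most $r$ generators and $r + r(r-1)/2 \le r^2$ relators (the commutator and order relators).

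Next I would transport this effective presentation of $I/I^2$ back to $(1+I)/(1+I^2)$ along $\varphi$. Concretely, the finitely presented group $P = \langle x_1,\dotsc,x_s \mid x_i^{d_i},\, [x_i,x_j]\rangle$ serves as the presentation of $(1+I)/(1+I^2)$, and the generators $g_i \in (1+I)/(1+I^2)$ are taken to be $1 + e_i$, where $e_i \in I$ are lifts of the chosen generators of $I/I^2$. The discrete logarithm for $(1+I)/(1+I^2)$ is obtained by composing: given $1+a$, compute $a \bmod I^2 \in I/I^2$ via subtraction in $R^+$ and reduction, then apply the (polynomial-time) discrete logarithm of the abelian group $I/I^2$. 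The exponential map reverses this: evaluate a word in the $x_i$ inside the abelian group $I/I^2$, lift to some $a \in I$, and return $1 + a$. Each step is a polynomial-time operation in $R^+$ or in the abelian group $I/I^2$, so both maps run in polynomial time.

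The only genuinely substantive point to verify is that $\varphi$ is a well-defined isomorphism, i.e. that the multiplicative coset structure modulo $1+I^2$ matches the additive coset structure modulo $I^2$; this is where the identity $(1+a)(1+b) = 1 + (a+b) + ab$ with $ab \in I^2$ does all the work, together with the observation that $1 + I^2$ is exactly the preimage of $I^2$ and is a (normal) subgroup since $I$ is a two-sided ideal. Everything else is bookkeeping: translating the abelian-group algorithms of \Cref{example:blackbox}(ii) through $\varphi$ and tracking the generator and relator counts. I expect no real obstacle here; the lemma is the abelian base case of the filtration $1 + I^{2^i}$ that will later be assembled via \Cref{cor:patchingmany}.
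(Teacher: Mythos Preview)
Your proposal is correct and follows essentially the same approach as the paper: use the isomorphism $(1+I)/(1+I^2)\cong I/I^2$ (the paper writes it as $\overline a\mapsto\overline{1-a}$, which is your map up to sign), compute $I/I^2$ as a finite abelian group in polynomial time, and then invoke \Cref{example:blackbox}(ii) together with the rank bound $\rank(I/I^2)\le\rank(R^+)=r$. Your write-up is more detailed in spelling out the transport of the discrete logarithm and exponential map through $\varphi$, but the argument is the same.
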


\begin{proof}
  Note that the map $(1 + I)/(1 + I^2) \to I/I^2, \overline a \mapsto \overline {1 - a}$ is a well-defined isomorphism.
  We can compute the ideal $I^2$ as well as the factor group $I/I^2$ (of additive groups) as an abelian group in polynomial time.
  Thus an effective presentation can be found as claimed by~\cref{example:blackbox}~(i), since the rank of $I/I^2$ is bounded by $\rank(I) \leq \rank(R^+)$.
\end{proof}

\begin{prop}\label{prop:unipotent}
  There exists a polynomial-time algorithm, that given a finite ring $R$ and a nilpotent two-sided ideal $I$ of $R$, determines an effective presentation of $1 + I$.
\end{prop}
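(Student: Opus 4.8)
The plan is to exhibit $1 + I$ as an iterated extension of the abelian groups handled by \Cref{lem:nilbasecase} and then assemble an effective presentation using the nested patching result \cref{cor:patchingmany}.

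First I would pass to the ``squaring filtration'' of $I$. Since $I$ is a nilpotent two-sided ideal, the descending chain $I \supseteq I^2 \supseteq I^4 \supseteq \dotsb$ of two-sided ideals reaches $I^{2^m} = 0$ for some $m$; because each proper inclusion in the chain of powers of $I$ at least halves the order, the nilpotency index of $I$ is at most $\log_2\lvert R\rvert + 1$, whence $m = O(\log\log\lvert R\rvert)$, and in particular the chain has only polynomially many terms. Computing this chain is a polynomial-time task: each ideal $I^{2^{i+1}} = (I^{2^i})^2$ is obtained from the previous one by one application of the (bilinear) multiplication map, and products of ideals can be computed in polynomial time from the encoding of $R$.

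Setting $J_i = I^{2^i}$, I note that $J_i^2 = J_{i+1}$, so \Cref{lem:nilbasecase} applies verbatim to each $J_i$ and produces, in polynomial time, an effective presentation of the abelian factor $A_i = (1 + J_i)/(1 + J_{i+1})$ with at most $r = \rank(R^+)$ generators and $r^2$ relators. Each $1 + J_i$ is a black box group by \Cref{example:blackbox}~(i), and since $J_{i+1}$ is a two-sided ideal, $1 + J_{i+1}$ is normal in $1 + J_i$ (conjugation by a unit $u$ sends $1 + b$ to $1 + ubu^{-1}$ with $ubu^{-1} \in J_{i+1}$), giving exact sequences of black box groups
\[ 1 \longrightarrow 1 + J_{i+1} \longrightarrow 1 + J_i \longrightarrow A_i \longrightarrow 1. \]
Re-indexing so that $G_0 = 1 + J_m = \{1\}$ and $G_j = 1 + J_{m-j}$, these become exactly the chain of extensions required by \cref{cor:patchingmany}, whose top group is $G_m = 1 + J_0 = 1 + I$.

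Applying \cref{cor:patchingmany} with $l = m$ and $n = r$ then yields an effective presentation of $1 + I$ in polynomial time, with at most $mr$ generators and $2m^2r^2$ relators; as $m$ and $r$ are both polynomially bounded in the input length, this is the desired conclusion. The only point requiring genuine care is the choice of filtration: using the squaring chain rather than the linear chain $I \supseteq I^2 \supseteq I^3 \supseteq \dotsb$ is precisely what makes $J_i^2 = J_{i+1}$, so that the factors are the groups produced by \Cref{lem:nilbasecase}, while simultaneously keeping the number of extension steps logarithmically small so that the patching stays polynomial-time. Everything else---computing the filtration, invoking the base case, and patching---is a direct combination of results already established, so I do not expect any substantial obstacle beyond this bookkeeping.
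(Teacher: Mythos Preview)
Your proposal is correct and follows essentially the same approach as the paper: both use the squaring filtration $I^{2^i}$, invoke \Cref{lem:nilbasecase} on each step (using that $(I^{2^i})^2 = I^{2^{i+1}}$), and patch the resulting short exact sequences via \Cref{cor:patchingmany}. The only difference is cosmetic indexing (the paper numbers the chain from the bottom up as $I_i = I^{2^{k-i}}$, whereas you number from the top and then re-index), and you spell out the bound $m = O(\log\log\lvert R\rvert)$ a bit more carefully than the paper does.
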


\begin{proof}
  Let $n \in \Z_{\geq 1}$ be minimal such that $I^n = \{0\}$. Let $k = \lceil \log_2(n) \rceil$ and let $r = \rank(R^+)$.
  Setting $I_i := I^{2^{k - i}}$, for each $0 \leq i \leq k$ we have an exact sequence
  \[ 1 \to 1 + I_{i - 1} \to 1 + I_i \to (1 + I_{i})/(1 + I_{i - 1}) \to 1.\]
  Note that $1 + I_0 = \{ 1 \}$ since $I^{2^k} = \{0\}$.
  By Lemma~\ref{lem:nilbasecase} for each $0 \leq i \leq k$ we can determine effective presentations of $(1 + I_{i})/(1 + I_{i - 1})$ in polynomial time and these presentations have at most $r$ generators and $r^2$ relators.
  Applying~\cref{cor:patchingmany} shows that we can determine an effective presentation of $1 + I$ in polynomial time.
  Note that as $n \leq \log_2(\lvert R \rvert)$ (the maximal length of a series of proper subgroups), $n$ is polynomial in the length of $R$ and the claim about the complexity follows.
\end{proof}

\subsection{The semisimple case}

We now consider the problem of computing unit groups of semisimple finite rings,
beginning with $p$-rings, that is, rings whose order is the power of a rational prime $p$.
Note that in contrast to the unipotent case, we reduce the problem to classical number theoretic questions.

\begin{lemma}\label{prop:semisimple}
  For finite semisimple $p$-rings,
  \begin{enumerate}
    \item
      the problem of determining generators of the unit group reduces in probabilistic polynomial time to \Pprimroots, and
    \item
      the problem of determining an effective presentation of the unit group reduces in probabilistic polynomial time to \Pprimroots{} and \Pdisclog.
  \end{enumerate}
\end{lemma}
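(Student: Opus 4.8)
The plan is to use the Wedderburn structure theory for the finite semisimple $p$-ring $R$. Since $R$ is semisimple, it decomposes as a finite product $R \cong \prod_{i} \mat_{n_i}(k_i)$ of matrix rings over finite fields $k_i$, and this decomposition induces a direct product decomposition $R^\times \cong \prod_i \GL_{n_i}(k_i)$ of the unit group. The first task is therefore to compute this Wedderburn decomposition explicitly in polynomial time: that is, to find the central primitive idempotents, the simple factors, and the isomorphisms $e_i R \cong \mat_{n_i}(k_i)$, together with the finite fields $k_i$ in the standard encoding as $\F_p(\alpha)$ via a minimal polynomial. For a finite $p$-ring this can be done deterministically (or at worst with randomization for the explicit matrix isomorphism), so computing generators and the decomposition reduces to routine structure-theoretic algorithms. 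I would cite the relevant algorithmic results on computing with associative algebras over finite fields for this step.

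For part (i), once the decomposition is available, generating $R^\times$ reduces to generating each $\GL_{n_i}(k_i)$. The group $\GL_n(k)$ is generated by the elementary matrices together with a single diagonal matrix $\mathrm{diag}(\omega, 1, \dotsc, 1)$, where $\omega$ is a primitive root of $k^\times$; the elementary matrices themselves require no number theory, and only the generator $\omega$ of the cyclic group $k^\times$ requires a call to \Pprimroots. Pulling these generators back through the isomorphism $e_i R \cong \mat_{n_i}(k_i)$ and assembling them across the factors yields generators of $R^\times$. Thus determining generators reduces in probabilistic polynomial time to \Pprimroots, as claimed.

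For part (ii), I would invoke \Cref{example:gln}: for each factor, determining an effective presentation of $\GL_{n_i}(k_i)$ reduces in polynomial time to \Pprimroots and \Pdisclog. To pass from the individual factors to $R^\times$, note that the direct product decomposition gives, for each $j$, an exact sequence
\[ 1 \longrightarrow \prod_{i < j} \GL_{n_i}(k_i) \longrightarrow \prod_{i \leq j} \GL_{n_i}(k_i) \longrightarrow \GL_{n_j}(k_j) \longrightarrow 1, \]
so that an effective presentation of $R^\times$ can be built up from the effective presentations of the factors by repeated application of \Cref{thm:patching} (equivalently, a single application of \Cref{cor:patchingmany}). Since the number of factors, the matrix sizes $n_i$, and the field degrees are all polynomially bounded in the length of $R$, the whole construction runs in probabilistic polynomial time given the two oracles.

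The main obstacle I anticipate is the algorithmic Wedderburn decomposition: producing an explicit isomorphism $e_i R \cong \mat_{n_i}(k_i)$ (not merely the abstract isomorphism type) in polynomial time, so that the black-box transfer of generators and of the discrete-logarithm/exponential maps between $R^\times$ and $\prod_i \GL_{n_i}(k_i)$ is effective. This requires locating a full set of matrix units inside each simple component, which is where randomization enters; everything downstream—the generation argument and the patching of presentations—is then comparatively routine given the earlier results.
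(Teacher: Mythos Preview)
Your proposal is correct and follows essentially the same route as the paper: reduce via an explicit Wedderburn decomposition to a product of matrix rings over finite fields, then handle each $\GL_{n_i}(k_i)$ via \Cref{example:gln} (and, for (i), via elementary matrices plus a primitive root). The paper resolves the obstacle you flagged---computing the explicit isomorphism $R \cong \prod_i \mat_{n_i}(k_i)$ in probabilistic polynomial time---by citing the structure-algorithm results of Friedl--R\'onyai and R\'onyai, and then simply assumes $R = \mat_n(k)$; your more explicit use of \Cref{thm:patching} for the product is a harmless elaboration of the same reduction.
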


\begin{proof}
  A finite semisimple $p$-ring $R$ is the same as a semisimple $\F_p$-algebra.
  In particular, the celebrated theorems of Wedderburn and Wedderburn--Artin (see for example~\cite[(3.5) \& (13.1)]{MR1838439}) imply that there exists an isomorphism
  \[ R \to \mat_{n_1}(k_1) \times \dotsb \times \mat_{n_r}(k_r) \] 
  with $n_1,\dotsc,n_r \in \Z_{\geq 1}$ and $k_1,\dotsc,k_r$ finite fields of characteristic $p$.
  Such an isomorphism can be explicitly determined in probabilistic polynomial time
  by~\cite[1.5 B]{10.1145/22145.22162} and~\cite[Theorem 6.2]{10.1145/28395.28438}.
  Thus we can assume that $R = \mat_n(k)$ is a matrix ring over a finite field $k$ and that
  $n$ as well as the length of $k$ are polynomial in the length of $R$.

  (i): Given an oracle to determine primitive roots of finite fields, generators of $\GL_n(k)$ can be obtained using a weaker version of \cref{example:gln}. Alternatively, in~\cite{Taylor1987} explicit two element generating sets of $\GL_n(k)$ are constructed (again using an oracle for primitive roots).

  (ii): This follows directly from~\cref{example:gln}.
\end{proof}

\subsection{The general case.}

We can now combine the unipotent and semisimple case, beginning again with the case of $p$-rings.

\begin{prop}\label{prop:ppower}
  For finite $p$-rings,
  \begin{enumerate}
    \item
      the problem of computing generators of the unit group reduces in probabilistic polynomial time to \Pprimroots, and
    \item
      the problem of computing an effective presentation of the unit group reduces in probabilistic polynomial time to \Pprimroots{} and \Pdisclog.
  \end{enumerate}
\end{prop}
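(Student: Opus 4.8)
The plan is to exploit the Jacobson radical $J$ of $R$, which for a finite ring is a nilpotent two-sided ideal, together with the canonical exact sequence
\[ 1 \longrightarrow 1 + J \longrightarrow R^\times \xrightarrow{\ \pi\ } (R/J)^\times \longrightarrow 1 \]
of black box groups. This splits the problem into the unipotent part $1 + J$, handled by \Cref{prop:unipotent}, and the semisimple part $(R/J)^\times$, handled by \Cref{prop:semisimple}. Since $R$ is a $p$-ring, so is the quotient $R/J$, which is moreover semisimple, so that \Cref{prop:semisimple} does apply.

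First I would compute $J$; for a finite-dimensional algebra over a finite field this can be done in polynomial time by the same structural algorithms already used in the proof of \Cref{prop:semisimple} (see~\cite{10.1145/22145.22162,10.1145/28395.28438}). As $R$ is finite, $J$ is nilpotent, so $1 + J \subseteq R^\times$ and, by \cref{example:blackbox}~(i), $1 + J$ is a black box group; likewise $R/J$ and $(R/J)^\times$ are black box groups. To realize the displayed sequence as an exact sequence of black box groups in the sense of \Cref{sec:prelim}, I must check that preimages are computable. For the inclusion this is immediate. For $\pi$, given $\overline u \in (R/J)^\times$ one lifts it to any $r \in R$ with $r + J = \overline u$; such an $r$ is automatically a unit of $R$, since $\overline r\,\overline s = 1$ forces $rs = 1 + j$ with $j \in J$, and $1 + j \in R^\times$ then provides $r$ with a right inverse $s(1+j)^{-1}$ (and symmetrically a left inverse). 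Hence $\pi$ is surjective with easily computable preimages and $\ker(\pi) = 1 + J$.

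For part (ii) I would then apply \Cref{prop:unipotent} to obtain an effective presentation of $1 + J$ in polynomial time, and \Cref{prop:semisimple}~(ii) to obtain one of $(R/J)^\times$ at the cost of \Pprimroots{} and \Pdisclog. Feeding these two effective presentations together with the exact sequence into \Cref{thm:patching} yields an effective presentation of $R^\times$ in polynomial time, which is the asserted reduction. For part (i), which asks only for generators, I would instead combine generators of $1 + J$ (from \Cref{prop:unipotent}, requiring no oracle) with $\pi$-lifts of generators of $(R/J)^\times$ (from \Cref{prop:semisimple}~(i), reducing to \Pprimroots); these generate $R^\times$ because $\iota(1+J)$ is normal and their images generate $(R/J)^\times$.

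The one genuinely new ingredient, and hence the main obstacle, is the polynomial-time computation of the Jacobson radical $J$, which I would delegate to the cited structural algorithms. Everything else is a direct assembly of \Cref{prop:unipotent}, \Cref{prop:semisimple} and \Cref{thm:patching}, once the routine verification that $\pi$ is surjective with computable preimages has been carried out.
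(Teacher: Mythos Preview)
Your approach is essentially the paper's: split via the exact sequence $1 \to 1+J \to R^\times \to (R/J)^\times \to 1$, handle $1+J$ with \Cref{prop:unipotent}, handle $(R/J)^\times$ with \Cref{prop:semisimple}, and glue using \Cref{thm:patching} (respectively, for (i), lift generators through $\pi$).

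There is one small but genuine gap in your computation of $J$. A finite $p$-ring is \emph{not} in general a finite-dimensional algebra over a finite field (consider $\Z/p^2\Z$), so the structural algorithms you cite from \cite{10.1145/22145.22162} do not apply to $R$ directly. The paper closes this by noting that $p\cdot 1_R$ is nilpotent (the additive group of $R$ has $p$-power order), hence $pR \subseteq J$; one then passes to the $\F_p$-algebra $R/pR$, computes its Jacobson radical there in probabilistic polynomial time, and takes its preimage under $R \to R/pR$ to obtain $J$. With this one extra sentence your argument is complete and matches the paper's.
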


\begin{proof}
  Let $R$ be a finite $p$-ring, and let $J$ be the Jacobson radical of $R$.
  Since $pR \subseteq J$, we can determine $J$ by lifting the Jacobson radical of $R/pR$ along the natural projection $R \to R/pR$.
  As $R/pR$ is an $\F_p$-algebra, the radical can be determined in probabilistic polynomial time by~\cite[1.5 A]{10.1145/22145.22162}.
  For the unipotent units $1 + J$ we can determine an effective presentation in polynomial time by~\cref{prop:unipotent}, which includes generators of $1 + J$.
  Consider now the exact sequence
  \[ 1 \to 1 + J \to R^\times \to (R/J)^\times \to 1. \]
  As $R/J$ is semisimple, (i) follows from~\cref{prop:semisimple}~(i), and (ii) from~\cref{prop:semisimple} together with~\cref{thm:patching}.
\end{proof}

\begin{theorem}\label{thm:main}
  For finite rings,
  \begin{enumerate}
    \item
      the problem of determining generators of unit groups reduces in probabilistic polynomial time to \Pfactor, and
    \item 
      the problem of determining effective presentations of unit groups reduces in probabilistic polynomial time to \Pfactor{} and \Pdisclog.
  \end{enumerate}
\end{theorem}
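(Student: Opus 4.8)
The plan is to bootstrap from the $p$-ring case (\Cref{prop:ppower}) by decomposing an arbitrary finite ring into its $p$-primary components, and then to eliminate the \Pprimroots{} oracle in favour of \Pfactor. The structural content is already contained in \Cref{prop:ppower,thm:patching}, so the remaining work is a primary decomposition together with the classical reduction of \Pprimroots{} to \Pfactor.

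First I would compute the order $\lvert R \rvert = \lvert R^+ \rvert$, which is immediate from the encoding, and factor it using the \Pfactor{} oracle to obtain the distinct primes $p_1,\dotsc,p_s$ dividing $\lvert R \rvert$. For each such prime $p$, the $p$-primary part $R_p = \{ x \in R : p^k x = 0 \text{ for some } k \geq 0 \}$ is the $p$-Sylow subgroup of the finite abelian group $R^+$, and it is a two-sided ideal: if $p^k x = 0$, then $p^k(rx) = r(p^k x) = 0 = (p^k x)r = p^k(xr)$ for all $r \in R$. Since $R^+ = \bigoplus_p R_p$ as abelian groups and $R_p R_q \subseteq R_p \cap R_q = \{0\}$ for $p \neq q$, the canonical map yields a ring isomorphism $R \cong \prod_{i=1}^s R_{p_i}$, computable in polynomial time from the additive structure, and hence $R^\times \cong \prod_{i=1}^s R_{p_i}^\times$. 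I would then apply \Cref{prop:ppower} to each $p$-ring $R_{p_i}$ to obtain generators (resp.\ an effective presentation) of $R_{p_i}^\times$, and assemble these for the product $R^\times = \prod_i R_{p_i}^\times$ by iterating \Cref{thm:patching} along the exact sequences $1 \to \prod_{j<i} R_{p_j}^\times \to \prod_{j \leq i} R_{p_j}^\times \to R_{p_i}^\times \to 1$. As $s \leq \log_2 \lvert R \rvert$, only polynomially many applications are needed, so this runs in probabilistic polynomial time; at this stage both statements hold with \Pfactor{} replaced by \Pprimroots.

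The step requiring the most care, and the only genuinely new ingredient beyond the $p$-ring case, is the removal of the \Pprimroots{} oracle. This uses the standard fact that \Pprimroots{} reduces probabilistically to \Pfactor: to find a generator of $\F_q^\times$ one factors $q - 1 = \prod_j \ell_j^{a_j}$ using \Pfactor{} and then tests random elements $g \in \F_q^\times$, accepting $g$ as soon as $g^{(q-1)/\ell_j} \neq 1$ for every $j$. A random element is a generator with probability $\varphi(q-1)/(q-1) = \Omega(1/\log\log q)$, so an expected polynomial number of trials suffices. Substituting this reduction into the two statements obtained above yields (i), which now depends only on \Pfactor, and (ii), which depends on \Pfactor{} and \Pdisclog. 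I do not expect a serious obstacle here: \Pfactor{} is already needed for the primary decomposition, so feeding it into the \Pprimroots{} elimination introduces no new oracle, and the composition of all the reductions remains of probabilistic polynomial-time type.
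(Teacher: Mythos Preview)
Your proposal is correct and follows essentially the same route as the paper: factor $\lvert R\rvert$ to obtain the primary decomposition $R\cong\prod_i R_{p_i}$ into $p$-rings, invoke \Cref{prop:ppower} on each factor, and then absorb \Pprimroots{} into \Pfactor{} via the standard probabilistic reduction. The only differences are expository---you spell out the Sylow-ideal argument and the assembly via \Cref{thm:patching}, whereas the paper states these steps in one line---and one slightly misleading phrase (``with \Pfactor{} replaced by \Pprimroots{}'' should read ``with \Pprimroots{} in addition to \Pfactor{}'', since \Pfactor{} is already needed for the decomposition).
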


\begin{proof}
  By factoring $\lvert R \rvert$, we can construct an isomorphism $R \to R_1 \times \dotsb \times R_r$, where $R_1,\dotsc,R_n$ are finite rings of coprime prime power orders.
  The result follows then from~\cref{prop:ppower} noting that \Pprimroots{} reduces in probabilistic polynomial time to \Pfactor.
\end{proof}

\section{Abelianization \& $\K1$}\label{sec:abinv}

In this section we consider certain abelian invariants of finite rings, which are functors from the category of finite rings to the category of finite abelian groups.

\subsection{Abelianization}

For a group $G$, denote by $G^{\mathrm{ab}} = G/[G, G]$ the abelianization (or maximal abelian quotient) of $G$.
For a finite ring $R$, we consider the problem of computing $\ab{R}$, a problem which we first have to make precise.
Considering $R^\times$ as a black box group, by determining $\ab{R}$ we mean determining a finite abelian group $A$
together with a surjective homomorphism $R^\times \to A$ with kernel $[R^\times, R^\times]$ and for which images and preimages can be determined in probabilistic polynomial time (in particular $A \cong \ab{R}$).

\begin{prop}\label{thm:unitab}
  The problem of
  computing the abelianization $\ab{R}$ for a given finite ring $R$ 
  reduces in polynomial time to the problem of computing an effective presentation of $R^\times$.%
\end{prop}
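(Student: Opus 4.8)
The plan is to leverage the effective presentation $\alpha \colon R^\times \to P = \langle X \, | \, T \rangle$ provided by hypothesis, and reduce the computation of $\ab{R}$ to a computation with the finitely presented group $P$. The key observation is that the abelianization of a finitely presented group can be computed directly from its presentation: $P^{\mathrm{ab}}$ is the abelian group presented by the generators $X$ subject to the relators $T$ together with all commutators $[x_i, x_j]$ for $x_i, x_j \in X$. Concretely, one forms the relation matrix whose rows are the abelianized relators (each relator in $T$ is sent to its exponent-sum vector in $\Z^{\lvert X \rvert}$, using the binary-encoded exponents) and computes its Smith normal form as in~\cite{MR573842}; this yields a finite abelian group $A \cong P^{\mathrm{ab}} \cong \ab{R}$ in the standard encoding $\Z/d_1\Z \times \dotsb \times \Z/d_s\Z$, together with the natural projection $\beta \colon P \to A$ given by the change-of-basis from the Smith normal form computation.

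Next I would assemble the required surjection $R^\times \to A$ and its preimage algorithm from the pieces at hand. First I would define the map $R^\times \to A$ as the composite $\beta \circ \alpha$: given $g \in R^\times$, apply the discrete logarithm $\alpha$ to obtain a word, abelianize it to an exponent vector, and apply the Smith normal form transformation to land in $A$. Since $\alpha$ is evaluable in probabilistic polynomial time and the remaining steps are linear-algebraic over $\Z$ and hence deterministic polynomial time, the image map runs within the required bound. For preimages, given $a \in A$ I would lift $a$ back to an exponent vector over $\Z^{\lvert X \rvert}$ (again via the stored Smith normal form data), interpret this as a word $w \in (X \cup X^{-1})^\ast$, and apply the exponential map $\alpha^{-1}$ to obtain an element of $R^\times$ mapping to $a$. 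It remains to confirm that the kernel of $\beta \circ \alpha$ is exactly $[R^\times, R^\times]$, which is immediate because $\alpha$ is an isomorphism onto $P$ and $\ker(\beta) = [P, P]$ by construction of the abelianization.

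The main subtlety to verify carefully is the complexity bookkeeping around the binary-encoded exponents. The relators in $T$ carry exponents encoded in binary, so their exponent-sum vectors may have entries of bit-length polynomial in the input, but the number of relators and generators is polynomial in the length of $R$ by the constructions of~\Cref{thm:patching} and~\Cref{cor:patchingmany} that produced the presentation; thus the integer relation matrix has polynomially many rows and columns with polynomially-bounded entries, and its Smith normal form is computable in deterministic polynomial time. The only probabilistic ingredient is the evaluation of $\alpha$ and $\alpha^{-1}$, which is inherited directly from the effective presentation. I do not expect any genuine obstacle here; the statement is essentially a translation of ``abelianization of a finitely presented group is a Smith-normal-form computation,'' combined with the fact that an effective presentation transports this computation to $R^\times$. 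The one point requiring a clean statement is that the stored transformation data from the Smith normal form gives, in both directions, the explicit identification of $A$ with $P^{\mathrm{ab}}$ needed to evaluate images and preimages, rather than merely the isomorphism type of $A$.
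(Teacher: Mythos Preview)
Your proposal is correct and follows essentially the same route as the paper: obtain the presentation $P=\langle X\mid T\rangle$ from the hypothesis, abelianize by forming the exponent-sum relation matrix and computing its Smith normal form, and transport the resulting quotient map back to $R^\times$ via $\alpha$ and $\alpha^{-1}$. One small remark: your appeal to \Cref{thm:patching} and \Cref{cor:patchingmany} to bound the size of the presentation is unnecessary and slightly misplaced, since the reduction is to an \emph{arbitrary} oracle for effective presentations---the presentation $P$ is simply part of the oracle's output, and the Smith normal form step is polynomial in the length of that output.
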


\begin{proof}
  By~\cref{thm:main}~(ii) we can find a presentation of $R^\times$.
  For a finitely presented group $G$, a presentation for the abelianization $\ab{G}$ can be read off from the relators; see ~\cite[Proposition 2.68]{MR2129747} or~\cite[Sec. 11.2]{MR1267733}.
  Determining this as an abelian group (using the encoding from~\Cref{sec:prelim}) is then just a Smith normal form computation,
  which can be done in polynomial time by~\cite{MR573842}.
\end{proof}

\subsection{Computation of $\K1$}

We recall the construction of $\K1$ for a ring $R$ (for details see example~\cite{MR629979}).
For $n \geq 1$, the general linear group $\GL_n(R)$ embeds into $\GL_{n+1}(R)$ via
\[ A \mapsto \begin{pmatrix} A & 0 \\
0 & 1 \end{pmatrix}. \]
We obtain the infinite general linear group $\GL(R) = \varinjlim \GL_n(R)$ and the group $\K1(R) = \GL(R)^{\mathrm{ab}}$.
Note that there is a canonical group homomorphism
\[ c : R^\times \to \GL_1(R) \to \GL(R) \to \K1(R) \] 
which is surjective if $R$ is a finite ring by \cite[Theorem 4.2(b)]{MR174604}.
Considering $R^\times$ as a black box group, by determining $\K1(R)$ we mean determining a finite abelian group $A$
together with a surjective homomorphism $R^\times \to A$ with kernel $\ker(c)$ and for which images and preimages can be determined in polynomial time (in particular $A \cong \K1(R)$).

\begin{prop}\label{thm:k1}
  The problem of computing $\K1$ of finite rings ($p$-rings)
  reduces in probabilistic polynomial time to computing abelianizations of unit groups of finite rings ($p$-rings). %
\end{prop}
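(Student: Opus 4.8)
The plan is to reduce the computation of $\K1(R)$ to computing the abelianization $\ab{R}$, exploiting the surjection $c \colon R^\times \to \K1(R)$ guaranteed by \cite[Theorem 4.2(b)]{MR174604}. Since $c$ factors through $R^\times \to \ab{R}$ (as $\K1(R)$ is abelian, $c$ kills $[R^\times, R^\times]$), we obtain an induced surjection $\bar c \colon \ab{R} \to \K1(R)$. The entire task therefore reduces to identifying the kernel of $\bar c$ as an explicit subgroup of $\ab{R}$, so that $\K1(R)$ can be presented as the corresponding quotient. The heart of the matter is thus to pin down the kernel $\ker(\bar c)$ computationally.

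First I would reduce to the $p$-ring case (as the statement allows), decomposing $R$ as a product of rings of prime-power order via factoring $\lvert R \rvert$, and using that both $\K1$ and abelianization are compatible with finite products. For a $p$-ring $R$ with Jacobson radical $J$, I would then analyze the difference between $\ab{R}$ and $\K1(R)$ through the semisimple quotient $R/J$. Since $1 + J$ is a normal $p$-subgroup and the map $c$ is induced from stabilization of matrix groups, the key structural input is Whitehead's lemma: in $\GL(R)$ all elementary matrices become trivial in the abelianization, so $\K1(R)$ is insensitive to commutators arising from matrix stabilization. Concretely, for a matrix ring $\mat_n(k)$ over a finite field, one has $\K1(\mat_n(k)) \cong k^\times$ via the determinant (Morita invariance), and I would use this to compute the contribution of the semisimple part $R/J$ explicitly.

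The main obstacle will be controlling the extra relations that $c$ imposes beyond those defining $\ab{R}$ — that is, describing $\ker(\bar c)$ by generators one can actually compute. The clean way to do this is to observe that $\K1(R)$ is itself the abelianization of $\GL(R)$, and to apply the machinery of the paper: the inclusion $R^\times = \GL_1(R) \hookrightarrow \GL_n(R)$ together with an effective presentation (from \Cref{thm:main}) lets one compute $\GL_n(R)^{\mathrm{ab}}$ for $n$ large enough that stabilization has stabilized. Here I would invoke that for finite rings $\K1(R) = \GL_n(R)^{\mathrm{ab}}$ already for bounded $n$ (again by \cite{MR174604}), so only finitely many matrix sizes need to be examined, keeping everything polynomial-time. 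Computing $\ab{R}$ reduces to the hypothesis (computing abelianizations of unit groups of finite rings), and computing $\GL_n(R)^{\mathrm{ab}}$ likewise reduces to computing the abelianization of the unit group of the ring $\mat_n(R)$, whose unit group is $\GL_n(R)$.

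Finally, the explicit surjection $R^\times \to \K1(R)$ with computable images and preimages is assembled as follows: express $\K1(R)$ as the abelian group $\GL_n(R)^{\mathrm{ab}}$ (obtained from the hypothesis applied to $\mat_n(R)$), and realize $c$ as the composite of $R^\times \to \GL_n(R)$ with the abelianization map, whose image and preimage computations reduce to the corresponding ones for the given abelianization data. I would verify that $\ker(c)$ is correctly captured as $\ker(\bar c)$ pulled back to $R^\times$, so that the output abelian group is isomorphic to $\K1(R)$ as required. The complexity bound follows since $n$ is bounded and each step is a polynomial-time reduction to the abelianization problem or a Smith normal form computation.
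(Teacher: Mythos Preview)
Your core idea is the same as the paper's: apply the abelianization oracle to the ring $\mat_n(R)$, so that $\GL_n(R)^{\mathrm{ab}}$ is computed directly, and then realize $\K1(R)$ as the image of the induced map $\ab{R}\to \GL_n(R)^{\mathrm{ab}}$. The paper does exactly this with $n=3$, citing \cite[Prop.~52]{MR629979} for the fact that $R^\times \to \GL_3(R)^{\mathrm{ab}}$ is already surjective with image $\K1(R)$; you should fix a concrete $n$ (here $3$ suffices for any finite, indeed any semilocal, ring) rather than leave ``bounded $n$'' unspecified.

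Two things to clean up. First, the detour through $p$-rings, the Jacobson radical, and Morita invariance of $\K1$ for matrix rings over fields is unnecessary: none of it is used once you observe that the oracle applied to $\mat_3(R)$ already gives $\K1(R)$. Second, and more importantly, drop the appeal to \Cref{thm:main} for an effective presentation of $\GL_n(R)$. That theorem reduces to \Pfactor{} and \Pdisclog{}, not to the abelianization oracle, so invoking it here would prove the wrong reduction. You correctly state one sentence later that computing $\GL_n(R)^{\mathrm{ab}}$ reduces to the hypothesis applied to $\mat_n(R)$; that sentence is the entire argument, and the earlier reference to effective presentations should be deleted.
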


\begin{proof}
  Consider the canonical map
  \[ f \colon  R^\times = \GL_1(R) \to \GL_3(R) \to \GL_3(R)^{\mathrm{ab}}, \, a \mapsto \overline{\begin{pmatrix}
  a & 0 & 0 \\ 0 & 1 & 0 \\ 0 & 0 & 1 \end{pmatrix}}. \] 
  By~\cite[Prop. 52]{MR629979} we know that $f$ is surjective with
  \[ {R^\times}{}/{}{\ker(f)} \cong \GL_3(R)^{\mathrm{ab}} \cong \K1(R). \] 
  Now $S = \mat_3(R)$ is again a finite ring of length a polynomial in the length of $R$.
  By assumption we can determine the abelianization $(S^{\times})^{\mathrm{ab}} = \GL_3(R)^{\mathrm{ab}}$
  and $(R^{\times})^{\mathrm{ab}}$.
  By lifting elements from $(R^{\times})^{\mathrm{ab}}$ to $R^\times$ and mapping them under $f$ %
  we can determine explicitly the induced map
$\overline f \colon (R^{\times})^{\mathrm{ab}} \to (S^{\times})^{\mathrm{ab}}$ of abelian groups.
  The claim follows by observing that $(R^{\times})^{\mathrm{ab}}/\ker(\overline{f}) \cong R^\times/\ker(f) \cong \K1(R)$.
  For $p$-rings the result follows from the fact that $S$ is a $p$-ring if $R$ is a $p$-ring.
\end{proof}

\section{Relations between computational problems}\label{sec:compproblems}

In previous sections, we have reduced the computational problems related to unit groups of finite rings to classical problems in algorithmic number theory. We now show that (except for the problem of computing generators), these are in fact equivalent problems.

\begin{prop}\label{lem:comringfactor}
  The following computational problems are probabilistic polynomial-time reducible to each other:
  \begin{enumerate}
    \item
      \Pfactor{},
    \item
      computing the order of unit groups of finite rings,
    \item
      computing the order of unit groups of finite commutative rings,
    \item
      computing the order of unit groups of rings $\Z/n\Z$, $n \neq 0$.
  \end{enumerate}
\end{prop}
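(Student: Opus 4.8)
The plan is to establish a cycle of probabilistic polynomial-time reductions
\[ \text{(i)} \le \text{(iv)} \le \text{(iii)} \le \text{(ii)} \le \text{(i)}, \]
where I write $A \le B$ to mean that problem $A$ reduces to problem $B$; the mutual reducibility of all four problems follows at once. Two of these four links are immediate: since the rings $\Z/n\Z$ are commutative and commutative rings are rings, problems (iv) and (iii) are literally special cases of (iii) and (ii) respectively, yielding $\text{(iv)} \le \text{(iii)}$ and $\text{(iii)} \le \text{(ii)}$ with no work. The link $\text{(i)} \le \text{(iv)}$ is the classical fact that recovering $\lvert (\Z/n\Z)^\times \rvert = \varphi(n)$ allows one to factor $n$ in probabilistic polynomial time; here I would simply invoke the (probabilistic variant of the) result of Miller, as recorded in~\cite{MR480295,MR2488898}.

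The substance of the proposition is therefore the remaining link $\text{(ii)} \le \text{(i)}$: computing $\lvert R^\times \rvert$ for an arbitrary finite ring $R$ given an oracle for \Pfactor. First I would factor $\lvert R \rvert$ and, exactly as in the proof of~\Cref{thm:main}, use this to split $R$ into a product of $p$-rings; since the order is multiplicative over this decomposition, it suffices to treat a single $p$-ring $R$. For such an $R$ let $J$ be its Jacobson radical, computable in probabilistic polynomial time by lifting the radical of $R/pR$, as in the proof of~\Cref{prop:ppower}. The exact sequence
\[ 1 \to 1 + J \to R^\times \to (R/J)^\times \to 1 \]
gives $\lvert R^\times \rvert = \lvert 1 + J \rvert \cdot \lvert (R/J)^\times \rvert$, and since $x \mapsto 1 + x$ is a bijection $J \to 1 + J$ we have $\lvert 1 + J \rvert = \lvert J \rvert$, a cardinality read off directly from the abelian group $J$.

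It remains to compute $\lvert (R/J)^\times \rvert$ for the semisimple ring $R/J$. Here I would invoke the Wedderburn--Artin decomposition used in the proof of~\Cref{prop:semisimple}, which produces, in probabilistic polynomial time, an isomorphism $R/J \cong \prod_{j} \mat_{n_j}(k_j)$ with the $k_j$ finite fields. Then $\lvert (R/J)^\times \rvert = \prod_j \lvert \GL_{n_j}(k_j) \rvert$, and each factor is given by the closed formula $\lvert \GL_{n}(k) \rvert = \prod_{t=0}^{n-1}(\lvert k \rvert^{n} - \lvert k \rvert^{t})$, evaluable in polynomial time. Multiplying these contributions yields $\lvert R^\times \rvert$. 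The point I expect to require the most care is precisely this last step: one must observe that all cardinalities can be extracted purely from the structural (Wedderburn) data, so that---unlike the computation of an effective presentation---no discrete logarithms are needed and the whole reduction uses only \Pfactor. Everything else is bookkeeping of cardinalities together with the already-established subroutines for computing radicals and Wedderburn decompositions.
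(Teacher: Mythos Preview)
Your proposal is correct and follows essentially the same route as the paper: the trivial specializations $\text{(iv)} \le \text{(iii)} \le \text{(ii)}$, Miller's result for $\text{(i)} \le \text{(iv)}$, and for $\text{(ii)} \le \text{(i)}$ the reduction to $p$-rings, computation of the Jacobson radical, and the Wedderburn--Artin decomposition to read off $\lvert R^\times \rvert = \lvert J \rvert \cdot \prod_j \lvert \GL_{n_j}(k_j) \rvert$. The only difference is cosmetic---the paper states the cycle in the opposite orientation and cites the radical/Wedderburn algorithms directly rather than via earlier propositions.
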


\begin{proof}
  (i) $\Rightarrow$ (ii): Assume that we have an oracle for factoring integers and let $R$ be a finite ring.
  We can assume that $R$ is a $p$-ring for some prime $p$.
  In probabilistic polynomial time we can determine the Jacobson radical $J = \jac(R)$; see~\cite[1.5 A]{10.1145/22145.22162},
  as well as finite extensions $k_1,\dotsc,k_r$ of $\F_p$ and integers $n_1,\dotsc,n_r$ such that
  \[ R/J \cong \mat_{n_1}(k_1) \times \dotsb \times \mat_{n_r}(k_r). \] 
  Thus we can determine $\lvert R^\times \rvert = \lvert 1 + J \rvert \cdot \lvert (R/J)^\times \rvert = \lvert J \rvert \cdot \lvert \GL_{n_1}(k_1) \rvert \dotsm  \lvert \GL_{n_r}(k_r) \rvert$.

  (ii) $\Rightarrow$ (iii), (iii) $\Rightarrow$ (iv): Clear.

  (iv) $\Rightarrow$ (i): This follows from the fact, that there exists a probabilistic polynomial-time algorithm that given $\varphi(n) = \lvert (\Z/n\Z)^\times \rvert$ determines a factorization of $n$; see~\cite[Sec. 10.4]{MR2488898} and~\cite{MR480295}.
\end{proof}

\begin{theorem}\label{thm:mainmain}
  The following computational problems are probabilistic polynomial-time reducible to each other:
  \begin{enumerate}
    \item
      \Pfactor{} and \Pdisclog,
    \item
      computing effective presentations of unit groups of finite rings,
    \item
      computing effective presentations of unit groups of finite commutative rings,
    \item
      computing effective presentations of unit groups of rings $\Z/n\Z$, $n \neq 0$, and finite fields,
    \item
      computing abelianizations of unit groups of finite rings,
    \item
      computing $\K1$ of finite rings.
  \end{enumerate}
\end{theorem}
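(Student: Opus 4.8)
The plan is to arrange all six problems into a single strongly connected graph of probabilistic polynomial-time reductions, so that they collapse into one equivalence class. Several edges are already in hand: read as reductions, \Cref{thm:main}~(ii) gives (i)\,$\Rightarrow$\,(ii), \Cref{thm:unitab} gives (ii)\,$\Rightarrow$\,(v), and \Cref{thm:k1} gives (v)\,$\Rightarrow$\,(vi). The inclusions of ring classes supply two further edges for free: since finite fields and the rings $\Z/n\Z$ are finite commutative rings, and finite commutative rings are finite rings, a solver for the larger class restricts to a solver for the smaller one, giving (ii)\,$\Rightarrow$\,(iii) and (iii)\,$\Rightarrow$\,(iv). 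It then remains only to close the loop by proving the two ``reverse'' reductions (iv)\,$\Rightarrow$\,(i) and (vi)\,$\Rightarrow$\,(i). Together with the edges above, this yields the cycles (i)$\to$(ii)$\to$(v)$\to$(vi)$\to$(i) and (ii)$\to$(iii)$\to$(iv)$\to$(i)$\to$(ii), which jointly make the graph strongly connected.

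For (iv)\,$\Rightarrow$\,(i) I would handle \Pfactor{} and \Pdisclog{} separately. For \Pfactor{} I would invoke \Cref{lem:comringfactor}, by which factoring reduces to computing $\lvert(\Z/n\Z)^\times\rvert$; a solver for (iv) produces an effective presentation of the abelian group $(\Z/n\Z)^\times$, and applying the abelianization procedure underlying \Cref{thm:unitab} (a Smith normal form computation on the abelianized relators) recovers it as an explicit abelian group, whose order is exactly $\varphi(n)$. For \Pdisclog{} I would use the effective presentation of $\F_q^\times \cong \langle x \mid x^{q-1}\rangle$: its discrete logarithm map expresses any element as a power of the generator fixed by the presentation, so a given instance with base a generator $b$ and target $h$ is recovered from the two resulting exponents modulo $q-1$, the exponent of $b$ being invertible precisely because $b$ generates $\F_q^\times$.

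For (vi)\,$\Rightarrow$\,(i) I would rerun the same two arguments through the identifications $\K1(\Z/n\Z) \cong (\Z/n\Z)^\times$ and $\K1(\F_q) \cong \F_q^\times$, valid because $\mathrm{SK}_1$ vanishes for finite commutative local rings (hence for the local factors $\Z/p^k\Z$ of $\Z/n\Z$) and for fields. By definition a solver for (vi) returns an abelian group $A \cong \K1(R)$ together with a homomorphism $R^\times \to A$ whose images and preimages are computable; specialized to $R=\Z/n\Z$ this delivers $(\Z/n\Z)^\times$ and hence $\varphi(n)$, while specialized to $R=\F_q$ the image computation is exactly a discrete logarithm in $\F_q^\times$.

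I expect the main difficulty to lie in the bookkeeping of these two reverse reductions rather than in any deep obstruction: one must verify that the data guaranteed by an effective presentation (respectively by the $\K1$ output) in the commutative special cases really yields \Pfactor{} and \Pdisclog{} in the precise input/output format those problems demand. In particular, the delicate points are converting a discrete logarithm taken with respect to the generator fixed by the presentation into one with respect to an arbitrary prescribed base, and confirming the $\mathrm{SK}_1$-vanishing used to identify $\K1$ with the unit group in the commutative instances driving (vi)\,$\Rightarrow$\,(i).
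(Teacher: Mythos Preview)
Your proposal is correct and follows the paper's proof closely, with one routing difference and one small imprecision worth noting.

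The paper closes the loop through the $\K1$-vertex differently: instead of your direct edge (vi)$\Rightarrow$(i), it proves (vi)$\Rightarrow$(iii) in one stroke, using that for \emph{every} finite commutative ring $R$ the canonical map $R^\times \to \K1(R)$ is an isomorphism. A $\K1$-oracle applied to such $R$ therefore already returns an abelian group $A$ together with computable maps $R^\times \leftrightarrow A$, which via Example~\ref{example:blackbox}~(ii) \emph{is} an effective presentation of $R^\times$. This avoids re-running the $\varphi(n)$ and discrete-log arguments inside the $\K1$ setting and yields the stronger intermediate reduction (vi)$\Rightarrow$(iii). Your route also works; your justification via vanishing of $\mathrm{SK}_1$ for finite commutative local rings plus the Chinese remainder theorem is a special case of the same fact the paper invokes.

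On the \Pdisclog{} half of (iv)$\Rightarrow$(i): you tacitly assume the oracle returns the specific one-generator presentation $\langle x \mid x^{q-1}\rangle$ of $\F_q^\times$, so that the discrete-logarithm map outputs a single exponent. The oracle is only promised to return \emph{some} effective presentation, possibly with several generators. The paper---and you, in your \Pfactor{} half---handle this uniformly: pass to the abelianization via a Smith normal form to obtain $\F_q^\times$ in the standard abelian-group encoding, where solving $\alpha \in \langle \beta \rangle$ is trivial linear algebra. Apply the same step here and your argument goes through unchanged.
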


\begin{proof}
  (i) $\Rightarrow$ (ii): \cref{thm:main}. (ii) $\Rightarrow$ (iii) and (iii) $\Rightarrow$ (iv) are clear.
  (ii) $\Rightarrow$ (v): \cref{thm:unitab}.

  (v) $\Rightarrow$ (vi): \cref{thm:k1}.

  (vi) $\Rightarrow$ (iii): This follows from the fact that for a finite commutative ring $R$ one has that the canonical map $R^\times \to \ab{R} \to \K1(R)$ is an isomorphism (see~\cite[Theorem 3.6(a)]{MR267009}).

  (iv) $\Rightarrow$ (i):
  We can solve \Pfactor{} by~\cref{lem:comringfactor}.
  Now let $k$ be a finite field and $\alpha, \beta \in k^\times$ with $\alpha \in \langle \beta \rangle$.
  By assumption we can determine an effective presentation of $k^\times$, from which we can determine the abelianization $k^\times \cong \ab{k}$, represented as an abelian group.
  Since it is straightforward to solve the discrete logarithm in our encoding of finite abelian groups, the claim follows.
\end{proof}

\begin{remark}\label{remark:nofactor}
  Note that restricting to $p$-rings, one can obtain an analogous result as in~\Cref{thm:main}, where (i) and (iv) are replaced by
  \begin{enumerate}
    \item[(i)$'$]
      \Pprimroots{} and \Pdisclog,
    \item[(iv)$'$]
      computing effective presentations of unit groups of finite fields.
  \end{enumerate}
  This is obtained from \Cref{prop:ppower} and the observation
  that the proof of ``(iv) $\Rightarrow$ (i)'' of~\Cref{thm:mainmain} shows that once one can determine presentations of unit groups of finite fields, one can solve \Pprimroots{} and \Pdisclog.
\end{remark}

\section{Applications}\label{sec:applications}

\subsection{Finite-dimensional algebras over finite fields}

We present applications of \Cref{thm:main} to various classes of finite rings, beginning with finite-dimensional algebras over finite fields.
Recall that if $A$ is a $d$-dimensional algebra over $\F_q$, the length of $A$ is $\log(\lvert A \rvert)^{O(1)} = (d \log(q))^{O(1)}$.
By applying~\Cref{thm:main} and~\Cref{remark:nofactor} we immediately obtain:

\begin{cor}\label{cor:alg}
  The problem of computing an effective presentation (or abelianization, or $\K1$ respectively) of $A^\times$ for a given finite-dimensional algebra $A$ over a finite field reduces in probabilistic polynomial time to \Pprimroots{} and \Pdisclog{}.
\end{cor}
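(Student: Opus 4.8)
The plan is to exploit that a finite-dimensional algebra over a finite field is automatically a finite $p$-ring, so that the full strength of \Cref{thm:main}---which invokes \Pfactor{} only to split a general finite ring into $p$-rings of coprime order---is unnecessary here. Instead I can feed $A$ directly into the $p$-ring reductions of \Cref{prop:ppower}, which cost only \Pprimroots{} and \Pdisclog{}, exactly as recorded in \Cref{remark:nofactor}.

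First I would record the basic observation. If $A$ is a $d$-dimensional algebra over $\F_q$ with $q = p^f$, then $A$ is a finite-dimensional $\F_q$-vector space, so $\lvert A \rvert = q^d = p^{fd}$ is a power of the single prime $p$. Thus $A$ is a finite $p$-ring, and its length is polynomial in $d$ and $\log q$, as already noted before the statement. Crucially, recognizing this requires no factorization of $\lvert A \rvert$, so the reduction can avoid \Pfactor{} entirely and use \Pprimroots{} in its place.

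For the effective presentation, I would then simply apply \Cref{prop:ppower}~(ii) to $R = A$, concluding that computing an effective presentation of $A^\times$ reduces in probabilistic polynomial time to \Pprimroots{} and \Pdisclog{}. For the abelianization $\ab{A}$, I would chain this with \Cref{thm:unitab}, which reduces computing $\ab{R}$ to computing an effective presentation of $R^\times$ in polynomial time with no number-theoretic oracle; composing the two reductions yields the claim for $\ab{A}$. For $\K1(A)$, I would use the $p$-ring version of \Cref{thm:k1}, which reduces computing $\K1$ of a $p$-ring to computing the abelianization of a $p$-ring. The only point to check is that the auxiliary ring appearing in that reduction, $S = \mat_3(A)$, stays inside our class: indeed $\mat_3(A)$ is again a finite-dimensional $\F_q$-algebra (of dimension $9d$), hence again a $p$-ring, so computing its abelianization again reduces to \Pprimroots{} and \Pdisclog{} by the previous paragraph.

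I do not expect any serious obstacle, since the statement is a specialization of the machinery already assembled. The only thing requiring care is the bookkeeping of the $p$-ring property through each reduction---in particular through the passage to $\mat_3(A)$ in the $\K1$ step---so that \Pfactor{} is never invoked and every reduction stays within the cheaper oracles \Pprimroots{} and \Pdisclog{}.
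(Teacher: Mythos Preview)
Your proposal is correct and follows essentially the same route as the paper: observe that a finite-dimensional $\F_q$-algebra is a $p$-ring and then invoke the $p$-ring reductions (\Cref{prop:ppower}, \Cref{thm:unitab}, \Cref{thm:k1}) recorded in \Cref{remark:nofactor}, thereby replacing \Pfactor{} by \Pprimroots{}. The paper compresses all of this into a single sentence citing \Cref{thm:main} and \Cref{remark:nofactor}, whereas you spell out the chain of reductions explicitly, including the closure of the class under passage to $\mat_3(A)$.
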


\subsection{Finite quotient rings of arithmetic orders}

The next application is motivated by computational algebraic number theory and noncommutative generalizations.
Let $R$ be either $\Z$ or $\F_q[X]$ with $q$ a prime power.
An \textit{arithmetic $R$-order} is a torsion-free finite $R$-algebra.
(Note that the standard encoding of $R$ extends to an encoding of arithmetic orders and also to ideals of arithmetic orders.)
Examples of arithmetic orders are $\Z$-orders in number fields or finite-dimensional $\Q$-algebras, and integral group rings $\Z[G]$, where $G$ is a finite group.

Let $\Lambda$ be an arithmetic $R$-order. A two-sided ideal $I$ of $\Lambda$ is of \textit{full rank}, if $\Lambda/I$ is finite.
In this case the residue class ring $\Lambda/I$ is a finite ring with length polynomial in the lengths of $\Lambda$ and $I$ respectively.
Thus by~\cref{thm:main} we obtain the following result, which includes and generalizes previous results on arithmetic orders, as mentioned in the introduction.

\begin{cor}\label{cor:arithmetic}
  The following computational problems are probabilistic polynomial-time reducible to each other:
  \begin{enumerate}
    \item
      \Pfactor{} and \Pdisclog{},
    \item
      computing effective presentations of unit groups of finite quotient rings of arithmetic orders,
    \item
      computing abelianizations of unit groups of finite quotient rings of arithmetic orders,
    \item
      computing $\K1$ of finite quotient rings of arithmetic orders,
    \item
      computing effective presentations of finite quotient rings of (maximal) $\Z$-orders in number fields.
  \end{enumerate}
\end{cor}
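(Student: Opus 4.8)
The plan is to mirror the proof of \Cref{thm:mainmain}, exploiting two structural features of the class of finite quotient rings of arithmetic orders. First, each such ring $\Lambda/I$ is a finite ring whose length is polynomial in the lengths of $\Lambda$ and $I$ (as recorded just before the statement), so the forward reductions already established for arbitrary finite rings apply verbatim, with the complexity bounds preserved. Second, the class is rich enough to contain $\Z/n\Z$ and every finite field, so the reverse reductions of \Cref{thm:mainmain} can be replayed inside it. Thus I would establish the cycle $(\mathrm{i})\Rightarrow(\mathrm{ii})\Rightarrow(\mathrm{iii})\Rightarrow(\mathrm{iv})\Rightarrow(\mathrm{i})$ together with $(\mathrm{ii})\Rightarrow(\mathrm{v})\Rightarrow(\mathrm{i})$, which yields mutual reducibility of all five problems.

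For the forward directions I would argue as follows. Because $\Lambda/I$ is a finite ring of polynomially bounded length, \Cref{thm:main}~(ii) gives $(\mathrm{i})\Rightarrow(\mathrm{ii})$, \Cref{thm:unitab} gives $(\mathrm{ii})\Rightarrow(\mathrm{iii})$, and \Cref{thm:k1} gives $(\mathrm{iii})\Rightarrow(\mathrm{iv})$. For the last step the point to check is that the auxiliary ring $S=\mat_3(\Lambda/I)$ appearing in the proof of \Cref{thm:k1} is again of the required form: one has $\mat_3(\Lambda/I)\cong\mat_3(\Lambda)/\mat_3(I)$, where $\mat_3(\Lambda)$ is once more an arithmetic $R$-order and $\mat_3(I)$ a full-rank two-sided ideal, so the abelianization oracle of $(\mathrm{iii})$ may legitimately be invoked on it. Finally, $(\mathrm{ii})\Rightarrow(\mathrm{v})$ is immediate, since finite quotients of maximal $\Z$-orders in number fields form a subclass of all finite quotients of arithmetic orders, and so an oracle for $(\mathrm{ii})$ in particular answers $(\mathrm{v})$.

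The reverse reductions $(\mathrm{iv})\Rightarrow(\mathrm{i})$ and $(\mathrm{v})\Rightarrow(\mathrm{i})$ carry the real content, and I would reduce them to the step ``$(\mathrm{iv})\Rightarrow(\mathrm{i})$'' of \Cref{thm:mainmain}. In both cases \Pfactor{} is easy: $\Z/n\Z$ is a finite quotient of $\Z$, the maximal order of $\Q$, so the oracle yields $(\Z/n\Z)^\times$ as an abelian group (in the case of $(\mathrm{iv})$ using the commutative identity $\K1(\Z/n\Z)\cong(\Z/n\Z)^\times$), hence $\varphi(n)$, from which $n$ is factored by \Cref{lem:comringfactor}. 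For \Pdisclog{} one must realize an arbitrary finite field $\F_{p^m}$ inside the class. For $(\mathrm{iv})$ this is routine: taking a monic lift $f\in\Z[X]$ of a degree-$m$ irreducible over $\F_p$ and a root $\theta$, the ring $\Z[\theta]=\Z[X]/(f)$ is an arithmetic $\Z$-order with $\Z[\theta]/(p)\cong\F_{p^m}$, and feeding it to the oracle produces $\F_{p^m}^\times$ as an explicit abelian group, in which discrete logarithms are trivial, exactly as in \Cref{thm:mainmain}.

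The one genuinely delicate point, and the main obstacle, is the same realization for item $(\mathrm{v})$, where the order is required to be \emph{maximal}: the ring $\Z[\theta]$ need not be the maximal order of $K=\Q(\theta)$, and computing the maximal order in general requires factoring the discriminant, precisely the obstacle we are trying to overcome. I would break this circularity through the order of operations: the oracle for $(\mathrm{v})$ already solves \Pfactor{} via the $\Z/n\Z$ argument above, so once factoring is in hand we may compute the maximal order $\mathcal{O}_K$ of $K=\Q(\theta)$ in polynomial time. Since $\bar f$ is irreducible, hence separable, over $\F_p$, we have $p\nmid\mathrm{disc}(f)$ and therefore $p\nmid[\mathcal{O}_K:\Z[\theta]]$; consequently $p$ is inert in $\mathcal{O}_K$ and $\mathcal{O}_K/p\mathcal{O}_K\cong\F_{p^m}$ is a finite quotient of the maximal order $\mathcal{O}_K$. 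Applying the oracle of $(\mathrm{v})$ to this ring then yields $\F_{p^m}^\times$ and solves \Pdisclog{} as before, closing the cycle and establishing the mutual reducibility of $(\mathrm{i})$--$(\mathrm{v})$.
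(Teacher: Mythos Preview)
Your proof is correct and follows essentially the same approach as the paper: the forward reductions come from the general finite-ring results (\Cref{thm:main}, \Cref{thm:unitab}, \Cref{thm:k1}), and the reverse reductions rest on realizing $\Z/n\Z$ and arbitrary finite fields as quotients of (maximal) $\Z$-orders in number fields, breaking the apparent circularity for (v) by first extracting \Pfactor{} from the oracle and only then computing the maximal order. You are more explicit than the paper on two points---that $\mat_3(\Lambda/I)\cong\mat_3(\Lambda)/\mat_3(I)$ remains a quotient of an arithmetic order (needed because you route the equivalences through $(\mathrm{iii})\Rightarrow(\mathrm{iv})$, whereas the paper reduces each of (ii)--(v) directly to (i) via \Cref{thm:mainmain}), and that $p\nmid\mathrm{disc}(f)$ guarantees $\mathcal{O}_K/p\mathcal{O}_K\cong\F_{p^m}$---but the underlying argument is the same.
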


\begin{proof}
  By \Cref{thm:mainmain} it suffices to show that for finite quotients of $\Z$ and finite fields $k$, one can construct a ring isomorphic quotient of a $\Z$-order in a number field, together with an isomorphism in probabilistic polynomial time.
  As $\Z$ itself is the maximal $\Z$-order of the number field $\Q$, it suffices to consider finite fields. Also, we can solve $\Pfactor$ (as in the proof of \Cref{thm:mainmain}).
  Let $f \in \F_p[X]$ be a monic defining polynomial, and let $g \in \Z[X]$ be a monic lift with nonnegative coefficients bounded by $p$.
  Then $g$ is irreducible with length polynomial in the length of $k$, and the $\Z$-order $\Lambda = \Z[X]/(g)$ of the number field defined by $g$ satisfies $\Lambda/p \Lambda \cong k$. Moreover, such an isomorphism can be found in polynomial time by~\cite{MR1052099}.
  As the computation of a maximal order reduces in probabilistic polynomial time to \Pfactor{} (see~\cite{Lenstra1992}), we can also assume that $\Lambda$ is maximal.
\end{proof}

\subsection{Group rings}

As a final application we apply our result to the problem of computing unit groups of group rings $R[G]$, where $G$ is a finite group and $R$ is itself a finite ring, for example $\F_q[G]$. Note that here we consider $G$ as being encoded via a Cayley table and thus of length $\lvert G \rvert^{O(1)}$. Therefore $R[G]$ is encoded with length $(\lvert G \rvert \log(\lvert R \rvert))^{O(1)}$.

\begin{cor}
  The problem of computing an effective presentation (or abelianization, or $\K1$ respectively) of $R[G]$ for a given finite group $G$ and finite ring $R$, reduces in probabilistic polynomial time to \Pfactor{} and \Pdisclog{}.
\end{cor}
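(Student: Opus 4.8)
The plan is to reduce the group ring case directly to \Cref{thm:main} by observing that $R[G]$ is itself a finite ring whose length is polynomial in the combined lengths of $R$ and $G$. First I would note that since $R$ is a finite ring and $G$ a finite group, the group ring $R[G]$ is a finite ring, with underlying additive group $(R^+)^{\lvert G \rvert}$ and multiplication determined bilinearly by the rule $(r \cdot h)(s \cdot h') = (rs) \cdot (hh')$ for $r, s \in R$ and $h, h' \in G$. The key complexity observation is that, given the encoding of $G$ via its Cayley table (of length $\lvert G \rvert^{O(1)}$) and the encoding of $R$ (of length $\log(\lvert R \rvert)^{O(1)}$), one can construct the standard encoding of $R[G]$ as a finite ring in time polynomial in $\lvert G \rvert$ and $\log(\lvert R \rvert)$: the multiplication map on the $\lvert G \rvert$-fold direct sum is assembled block by block from the multiplication of $R$ and the Cayley table of $G$.

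Once $R[G]$ is available as a finite ring of length $(\lvert G \rvert \log(\lvert R \rvert))^{O(1)}$, the three assertions follow immediately. For the effective presentation of $(R[G])^\times$, I would simply invoke \Cref{thm:main}~(ii), which reduces this to \Pfactor{} and \Pdisclog{} in time polynomial in the length of $R[G]$, hence polynomial in $(\lvert G \rvert \log(\lvert R \rvert))$. For the abelianization, I would then apply \Cref{thm:unitab}, and for $\K1$, I would apply \Cref{thm:k1} together with \Cref{thm:unitab}; both of these are polynomial-time reductions to the effective presentation problem, as established in \Cref{thm:mainmain}.

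I do not expect any genuine obstacle here, as this corollary is essentially a specialization of \Cref{thm:main} to a particular construction of finite rings. The only point that requires care is the bookkeeping of the complexity: one must verify that the passage from $(R, G)$ to $R[G]$ is polynomial-time in the stated input length, which hinges on the Cayley-table encoding of $G$ being of length $\lvert G \rvert^{O(1)}$ rather than $\log(\lvert G \rvert)^{O(1)}$. This is precisely why the corollary is stated with the group ring's length being $(\lvert G \rvert \log(\lvert R \rvert))^{O(1)}$: the dependence on $\lvert G \rvert$ is polynomial, not polylogarithmic, reflecting the cost of the Cayley-table encoding. With this encoding fixed, the construction of $R[G]$ and all subsequent reductions are routine applications of the earlier results, so the proof amounts to little more than citing \Cref{thm:main}, \Cref{thm:unitab}, and \Cref{thm:k1}.
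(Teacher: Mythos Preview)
Your proposal is correct and follows exactly the same approach as the paper: the corollary is stated without a separate proof, relying on the preceding observation that $R[G]$ is a finite ring with encoding length $(\lvert G \rvert \log(\lvert R \rvert))^{O(1)}$, so that \Cref{thm:main} (and \Cref{thm:unitab}, \Cref{thm:k1}) apply directly. Your write-up is more detailed than the paper's, but the substance is identical.
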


Note that the same results hold if one replaces $G$ by a monoid and considers the monoid ring $R[G]$.

\section{Implementation \& examples}\label{sec:computations}

The algorithms presented in the previous sections have been implemented in the computer algebra systems \textsc{Hecke}~\cite{FHHJ2017} and~\textsc{OSCAR}~\cite{OSCAR, OSCAR-book} and will be available in the next official release.
Based upon this implementation, we present numerical examples and compare the algorithm with an approach from~\cite{bley2024determinationstablyfreecancellation} for quotients of arithmetic orders.

\subsection{First K-group for modular group rings of small groups}

In~\cite{MR2220078}, Magurn determines $\K1(\F_p[G])$ for different finite groups $G$, including dihedral groups of order $2m$, $m = 2^r \geq 4$ (a result which was stated in Keating~\cite{MR689372}), the alternating groups $\mathfrak{A}_4$, $\mathfrak{A}_5$, the symmetric group $\mathfrak{S}_4$, and for arbitrary groups of the form $\mathrm{C}_2^m \times G$.
We have used our algorithm to determine $\ab{\F_2[G]}$ and $\K1(\F_2[G])$ for all nonabelian groups of even order up to $32$. (We have also computed $\F_p[G]^\times$, but there is no convenient way to present the results.)
The results are presented in Table~\ref{tab:results}.
For a finite group $G$ with ID $(i, j)$ in the library of small groups~\cite{MR1935567}, we display unique invariants of the finite abelian groups $\ab{\F_2[G]}$ and $\ab{\F_2[G]}/\K1(\F_2[G])$.
A group $(\Z/d_1\Z)^{n_1} \times \dotsb \times (\Z/d_r\Z)^{n_r}$ with $d_1 \mid d_2 \dotsm \mid d_r$ is represented by a list $[d_1^{n_1},\dotsc,d_r^{n_r}]$ of positive integers together with exponents, where we omit $n_i$ if the value is equal to $1$.
The results are consistent with
\begin{itemize}
  \item
    the results from~\cite{MR2220078},
  \item
    the fact that for a $p$-group the ring $\F_p[G]$ is local (see \cite[(5.24) Theorem]{curtisandreiner_vol1}), so that $\K1(\F_2[G]) \cong \ab{\F_2[G]}$ by~\cite[Theorem 3.6(b)]{MR267009},
  \item
    the following formula~\cite[Corollary]{MR2220078}: If $G$ is a finite group with $c$ conjugacy classes, then
    \[ \K1(\F_2[\mathrm{C}_2^n \times G]) \cong (\Z/2\Z)^{c(2^n - 1)} \times \K1(\F_2[G]). \] 
\end{itemize}

\begin{table}[ht!] 
  \caption{First K-group for modular group rings of small groups.}
  \small
  \label{tab:results}
  \begin{tabular}{ccll}\toprule
    $G$ & ID & $\ab{R}$ & $\ab{R}/\K1$  \\\midrule
$\mathfrak{S}_3$                      & $(6,1)$ & $[2^2]$ & $[2]$ \\
$\mathrm{D}_8$                        & $(8,3)$ & $[2^2,4]$ & $[]$ \\
$\mathrm{Q}_8$                        & $(8,4)$ & $[2^2,4]$ & $[]$ \\
$\mathrm{D}_{10}$                     & $(10,1)$ & $[6]$ & $[]$ \\
$\mathrm{Q}_{12}$                     & $(12,1)$ & $[2^3,4]$ & $[2]$ \\
$\mathfrak{A}_4$                      & $(12,3)$ & $[6]$ & $[]$ \\
$\mathrm{D}_{12}$                     & $(12,4)$ & $[2^5]$ & $[2]$ \\
$\mathrm{D}_{14}$                    & $(14,1)$ & $[14]$ & $[]$ \\
$\mathrm{C}_2^2\rtimes \mathrm{C}_4$ & $(16,3)$ & $[2^5,4^2]$ & $[]$ \\
$\mathrm{C}_4\rtimes \mathrm{C}_4$ & $(16,4)$ & $[2^5,4^2]$ & $[]$ \\
$\mathrm{M}_4(2)$ & $(16,6)$ & $[2^3,4^3]$ & $[]$ \\
$\mathrm{D}_{16}$ & $(16,7)$ & $[2^3,8]$ & $[]$ \\
$\mathrm{SD}_{16}$ & $(16,8)$ & $[2^3,8]$ & $[]$ \\
$\mathrm{Q}_{16}$ & $(16,9)$ & $[2^3,8]$ & $[]$ \\
$\mathrm{C}_2 \times \mathrm{D}_{8}$ & $(16,11)$ & $[2^7,4]$ & $[]$ \\
$\mathrm{C}_2 \times \mathrm{Q}_{8}$ & $(16,12)$ & $[2^7,4]$ & $[]$ \\
$\mathrm{C}_4 \circ \mathrm{D}_{8}$ & $(16,13)$ & $[2^7,4]$ & $[]$ \\
$\mathrm{D}_{18}$ & $(18,1)$ & $[2,14]$ & $[2]$ \\
$\mathrm{\mathrm{C}_3} \times \mathfrak{S}_3$ & $(18,3)$ & $[2^2,6^2]$ & $[2]$ \\
$\mathrm{C}_3\rtimes \mathfrak{S}_3$ & $(18,4)$ & $[2^5]$ & $[2^4]$ \\
$\mathrm{Q}_{20}$ & $(20,1)$ & $[2^3,12]$ & $[]$ \\
$\mathrm{F}_5$ & $(20,3)$ & $[2,4]$ & $[]$ \\
$\mathrm{D}_{20}$ & $(20,4)$ & $[2^4,6]$ & $[]$ \\
$\mathrm{D}_{22}$ & $(22,1)$ & $[62]$ & $[]$ \\
$\mathrm{C}_3\rtimes \mathrm{C}_8$ & $(24,1)$ & $[2^4,4^2,8]$ & $[2]$ \\
$\SL_2(\F_3)$ & $(24,3)$ & $[2^2,12]$ & $[]$ \\
$\mathrm{Q}_{24}$ & $(24,4)$ & $[2^4,4^2]$ & $[2]$ \\
$\mathrm{C}_4 \times \mathfrak{S}_3$ & $(24,5)$ & $[2^7,4^2]$ & $[2]$ \\
$\mathrm{D}_{24}$ & $(24,6)$ & $[2^4,4^2]$ & $[2]$ \\
$\mathrm{C}_2\times \mathrm{Q}_{12} $ & $(24,7)$ & $[2^9,4]$ & $[2]$ \\
& $(24,8)$ & $[2^6,4]$ & $[2]$ \\
$\mathrm{C}_3\times \mathrm{D}_8$ & $(24,10)$ & $[2^6,4^2,12]$ & $[]$ \\
$\mathrm{C}_3\times \mathrm{Q}_8$ & $(24,11)$ & $[2^6,4^2,12]$ & $[]$ \\
$\mathfrak{S}_4$ & $(24,12)$ & $[2^2,4]$ & $[2]$ \\
$\mathrm{C}_2\times \mathfrak{A}_4$ & $(24,13)$ & $[2^4,6]$ & $[]$ \\
$\mathrm{C}_2^2\times \mathfrak{S}_3$ & $(24,14)$ & $[2^{11}]$ & $[2]$ \\
$\mathrm{D}_{26}$ & $(26,1)$ & $[126]$ & $[126]$ \\
$\mathrm{Q}_{28}$ & $(28,1)$ & $[2^{4},28]$ & $[]$ \\
$\mathrm{D}_{28}$ & $(28,3)$ & $[2^{5},14]$ & $[]$ \\
$\mathrm{C}_5 \times \mathfrak{S}_3$ & $(30,1)$ & $[2^{4},30^{2}]$ & $[2]$ \\
$\mathrm{C}_3 \times \mathrm{D}_{10}$ & $(30,2)$ & $[3,6^{3}]$ & $[]$ \\
$\mathrm{D}_{30}$ & $(30,3)$ & $[6,30]$ & $[6]$ \\
\phantom{x} \\
\phantom{x} \\
\bottomrule
  \end{tabular}
  \hfill
  \begin{tabular}{cccc}\toprule
    $G$ & ID & $\ab{R}$ & $\ab{R}/\K1$ \\\midrule
    & $(32,2)$ & $[2^{13},4^{3}]$ & $[]$ \\
& $(32,4)$ & $[2^{7},4^{6}]$ & $[]$ \\
$\mathrm{C}_2^2\rtimes \mathrm{C}_8$ & $(32,5)$ & $[2^{10},4^{3},8]$ & $[]$ \\
& $(32,6)$ & $[2^{4},4^{3}]$ & $[]$ \\
& $(32,7)$ & $[2^{4},4^{3}]$ & $[]$ \\
& $(32,8)$ & $[2^{4},4^{3}]$ & $[]$ \\
& $(32,9)$ & $[2^{8},4,8]$ & $[]$ \\
& $(32,10)$ & $[2^{8},4,8]$ & $[]$ \\
$\mathrm{C}_4\wr \mathrm{C}_2$ & $(32,11)$ & $[2^{6},4^{2},8]$ & $[]$ \\
$\mathrm{C}_4\rtimes \mathrm{C}_8$ & $(32,12)$ & $[2^{10},4^{3},8]$ & $[]$ \\
& $(32,13)$ & $[2^{8},4,8]$ & $[]$ \\
& $(32,14)$ & $[2^{8},4,8]$ & $[]$ \\
& $(32,15)$ & $[2^{6},4^{2},8]$ & $[]$ \\
$\mathrm{M}_5(2)$ & $(32,17)$ & $[2^{7},4^{3},8^{2}]$ & $[]$ \\
$\mathrm{D}_{32}$ & $(32,18)$ & $[2^{4},4,16]$ & $[]$ \\
$\mathrm{SD}_{32}$ & $(32,19)$ & $[2^{4},4,16]$ & $[]$ \\
$\mathrm{Q}_{32}$ & $(32,20)$ & $[2^{4},4,16]$ & $[]$ \\
 & $(32,22)$ & $[2^{15},4^{2}]$ & $[]$ \\
 & $(32,23)$ & $[2^{15},4^{2}]$ & $[]$ \\
& $(32,24)$ & $[2^{13},4^{3}]$ & $[]$ \\
$\mathrm{C}_4\times \mathrm{D}_4$ & $(32,25)$ & $[2^{13},4^{3}]$ & $[]$ \\
$\mathrm{C}_4\times \mathrm{Q}_8$ & $(32,26)$ & $[2^{13},4^{3}]$ & $[]$ \\
$\mathrm{C}_2^2\wr \mathrm{C}_2$ & $(32,27)$ & $[2^{7},4^{3}]$ & $[]$ \\
& $(32,28)$ & $[2^{9},4^{2}]$ & $[]$ \\
& $(32,29)$ & $[2^{9},4^{2}]$ & $[]$ \\
& $(32,30)$ & $[2^{7},4^{3}]$ & $[]$ \\
& $(32,31)$ & $[2^{7},4^{3}]$ & $[]$ \\
& $(32,32)$ & $[2^{7},4^{3}]$ & $[]$ \\
& $(32,33)$ & $[2^{7},4^{3}]$ & $[]$ \\
& $(32,34)$ & $[2^{7},4^{3}]$ & $[]$ \\
& $(32,35)$ & $[2^{7},4^{3}]$ & $[]$ \\
$\mathrm{C}_2\times \mathrm{M}_4(2)$ & $(32,37)$ & $[2^{13},4^{3}]$ & $[]$ \\
& $(32,38)$ & $[2^{13},4^{3}]$ & $[]$ \\
$\mathrm{C}_2\times \mathrm{D}_8$ & $(32,39)$ & $[2^{10},8]$ & $[]$ \\
$\mathrm{C}_2\times {\rm SD}_{16}$ & $(32,40)$ & $[2^{10},8]$ & $[]$ \\
$\mathrm{C}_2\times \mathrm{Q}_{16}$ & $(32,41)$ & $[2^{10},8]$ & $[]$ \\
& $(32,42)$ & $[2^{10},8]$ & $[]$ \\
& $(32,43)$ & $[2^{7},8]$ & $[]$ \\
& $(32,44)$ & $[2^{7},8]$ & $[]$ \\
$\mathrm{C}_2^2\times \mathrm{D}_4$ & $(32,46)$ & $[2^{17},4]$ & $[]$ \\
$\mathrm{C}_2^2\times \mathrm{Q}_8$ & $(32,47)$ & $[2^{17},4]$ & $[]$ \\
& $(32,48)$ & $[2^{17},4]$ & $[]$ \\
& $(32,49)$ & $[2^{14},4]$ & $[]$ \\
& $(32,50)$ & $[2^{14},4]$ & $[]$ \\
\bottomrule
  \end{tabular}
\end{table}

\subsection{Examples from noncommutative arithmetic orders}

We consider computations of abelianizations of unit groups originating from~\cite{bley2024determinationstablyfreecancellation} in the context of the stably free cancellation property of integral group rings and $\Z$-orders in general.
The finite rings we consider arise as follows.
For a $\Z$-order $\Lambda$ of a semisimple $\Q$-algebra $A$, any non-trivial decomposition $A \cong A_1 \times A_2$ induces a fiber product involving a finite quotient ring $R$ of $\Lambda$.
Of particular interest are integral group rings $\Lambda = \Z[G]$, where $G$ is related to binary polyhedral groups.
In~\cite{bley2024determinationstablyfreecancellation} the problem of determining $R^\times$ and $\ab{R}$ was solved by constructing an explicit faithful permutation representation $\Aut_{\Z}(R^+) \to \mathfrak{S}_d$, or (if $\operatorname{char}(R) = p$) a faithful linear representation $\Aut_{\Z}(R^+) \to \GL_n(\F_p)$.
Together with the algorithms of~\cite{MR2282916,bley2024determinationstablyfreecancellation} for determining generators of $R^\times$, one obtains a ``realization'' of $R^\times \to \Aut_{\Z}(R^+)$ (considered as a black box group) as a subgroup of a direct product of permutation groups or linear groups over finite fields, from which $\ab{R}$ can be determined.
The algorithm of loc. cit. has been implemented using both \textsc{OSCAR} and \textsc{Magma}~\cite{Magma}.
Note that the complexity of this algorithm was not analyzed, but as the minimal degree of a faithful permutation presentation appears to grow fast, it is plausible that the algorithm is not polynomial in the length of $R$.

In the following, for $n \geq 2$ we denote by $\mathrm{Q}_{4n}$ the generalized quaternion group (also known as the dicyclic group) of order $4n$.
The examples provided in this section are also available on the author's homepage.

\begin{example}
  Let $G = \SL_2(\F_3) \times \mathrm{Q}_{12}$. We consider a finite quotient ring $R$ of $\Z[G]$ with underlying abelian group 
  \[ R^+ \cong (\Z/6\Z)^2 \times (\Z/18\Z)^2 \times (\Z/36\Z)^4 \times (\Z/72\Z)^4. \] 
  In less than a second, our algorithm finds a presentation of $R^\times$ with $24$ generators and $301$ relators and 
  \[ \ab{R} \cong (\Z/2\Z)^6 \times (\Z/6\Z)^3 \times (\Z/24\Z). \]
  The algorithm of~\cite{bley2024determinationstablyfreecancellation} finds an embedding
  \[ R^\times \to \mathfrak{S}_{648} \times \mathfrak{S}_{3840} \times \mathfrak{S}_{3840} \times \mathfrak{S}_{6480} \times \mathfrak{S}_{6480} \] 
  and determines $\ab{R}$ in $17$ seconds.
\end{example}

\begin{example}
  Let $G = \SL_2(\F_3) \times \mathrm{Q}_{20}$. We consider a finite quotient ring $R$ of $\Z[G]$ with underlying abelian group 
  \[ R^+ \cong (\Z/6\Z)^4 \times (\Z/12\Z)^2 \times (\Z/60\Z)^2 \times (\Z/120\Z)^8. \] 
  In less than a second, our algorithm finds a presentation of $R^\times$ with $47$ generators and $1655$ relators and 
  \[ \ab{R} \cong (\Z/2\Z)^8 \times (\Z/4\Z)^4 \times (\Z/12\Z) \times (\Z/24\Z). \]
  The algorithm of~\cite{bley2024determinationstablyfreecancellation} finds an embedding
  \[ R^\times \to \GL_2(\F_5) \times \GL_4(\F_5) \times \GL_4(\F_5) \times \mathfrak{S}_{16711680} \] 
  and determines $\ab{R}$ in $180$ minutes.
\end{example}

\begin{example}
  Let $G = \mathrm{Q}_8 \rtimes \SL_2(\F_3)$. We consider a finite quotient ring $R$ of $\Z[G]$ with underlying abelian group 
  \[ R^+ \cong (\Z/24\Z)^8 .\] 
  In less than a second, our algorithm finds a presentation of $R^\times$ with $31$ generators and $496$ relators and
  \[ \ab{R} \cong (\Z/2\Z)^5 \times (\Z/12\Z). \]
  The algorithm of~\cite{bley2024determinationstablyfreecancellation} finds an embedding
  \[ R^\times \to \GL_4(\F_3) \times \GL_4(\F_3) \times \mathfrak{S}_{16711680} .\] 
  and determines $\ab{R}$ in 96 minutes.
\end{example}

\bibliography{main.bib}
\bibliographystyle{amsalpha}

\end{document}